\theoremstyle{plain}
\newtheorem{Th}{Theorem}
\newtheorem{Le}[Th]{Lemma}
\newtheorem{Pro}[Th]{Proposition} 
\newtheorem{Cor}[Th]{Corollary}
\newtheorem{Rem}[Th]{Remark}
\newcommand{\thistheoremname}{\hat}
\newtheorem*{genericthm*}{\thistheoremname}
\newenvironment{namedthm*}[1]
  {\renewcommand{\thistheoremname}{#1}%
   \begin{genericthm*}}
  {\end{genericthm*}}
\def \Hnc{\mathbb{H}_{\mathbb{C}}^n}
\def\dHnc{\partial\mathbb{H}_{\mathbb{C}}^n}
\def \Hc{\mathbb{H}_{\mathbb{C}}^2}
\def\dHc{\partial\mathbb{H}_{\mathbb{C}}^2}
\newcommand{\Sym}{\text{Sym}(5)}
\newcommand{\la}{\langle } 
\newcommand{\ra}{\rangle}
\newcommand{\A}{\mathbb{A}}
\begin{document}

\title[]{New bounds for the Simplicial volume of complex hyperbolic surfaces}
\author[]{Hester Pieters}
\address{Weizmann Institute of Science, Rehovot, Israel}
\email{hester.pieters@weizmann.ac.il}
\thanks{This research was supported by Swiss National Science Foundation grant number PP00P2-128309/1.}
\subjclass[2010]{53C23, 57N16, 57N65}
\date{}
\maketitle

\begin{abstract}
We give estimates of the Gromov norm of the top dimensional class in $H_c^4(\mathrm{Isom}(\Hc);\mathbb{R})$. As a consequence, we obtain an explicit upper bound for the simplicial volume of closed oriented manifolds that are locally isometric to $\Hc$. 
\end{abstract}

\section{Introduction}
\label{intro}

Simplicial volume was introduced by Gromov in \cite{Gr}. It gives a topological measure of the complexity of a manifold. Until now its exact value has only been computed for hyperbolic manifolds \cite{Gr}, \cite{Th1} and for closed manifolds covered by $\mathbb{H}_{\mathbb{R}}^2\times\mathbb{H}_{\mathbb{R}}^2$ \cite{Bu2}. Except for these results no explicit upper bounds for the simplicial volume are known. There are some more nonvanishing results. For example, the simplicial volume of oriented closed connected locally symmetric spaces of non-compact type is nonzero \cite{LS}. Also, in the case of negative curvature the simplicial volume is bounded from below by the Riemannian volume and therefore nonzero \cite{Gr}, \cite{Th1}. However, in general there is also no explicit lower bound known. 
\\\\
Let $\Hc$ be the complex hyperbolic plane with holomorphic curvature $-1$ and let $[c_\Phi]\in H^2_{c,b}(\mathrm{PU}(2,1);\mathbb{R})$ be the K\"ahler class. Our main result is 

\begin{Th}
\label{kahler}
\[
\frac{2}{9}\pi^2\leq\|[c_\Phi\cup c_\Phi]\|_\infty\leq\pi^2
\]
\end{Th}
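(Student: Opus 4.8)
The plan for the upper bound is short. I would represent the Kähler class by the continuous bounded cocycle
\[
c_\Phi(g_0,g_1,g_2)=\int_{\Delta(g_0 o,\,g_1 o,\,g_2 o)}\Phi,
\]
integrating the Kähler form $\Phi$ over the geodesic triangle spanned by the orbit of a fixed basepoint $o\in\Hc$; this is closed by Stokes and represents $[c_\Phi]$. By the Domic--Toledo inequality $\bigl|\int_\Delta\Phi\bigr|\le\pi$ for every geodesic triangle $\Delta$, the bound being saturated in the limit by ideal triangles contained in a complex geodesic (which has curvature $-1$), so $\|c_\Phi\|_\infty=\pi$. Since $\|f\cup g\|_\infty\le\|f\|_\infty\|g\|_\infty$ already at the cochain level and $c_\Phi\cup c_\Phi$ represents $[c_\Phi\cup c_\Phi]$, this yields $\|[c_\Phi\cup c_\Phi]\|_\infty\le\|c_\Phi\|_\infty^2=\pi^2$.

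For the lower bound the plan is to pass to a closed complex hyperbolic surface $M=\Gamma\backslash\Hc$ (which exists, e.g. for $\Gamma$ a torsion--free cocompact arithmetic lattice in $\mathrm{PU}(2,1)$) and run, via the holonomy $\pi_1(M)\to\mathrm{PU}(2,1)$, the chain
\[
2\operatorname{vol}(M)=\bigl\langle[c_\Phi\cup c_\Phi],[M]\bigr\rangle\ \le\ \|[c_\Phi\cup c_\Phi]\|_\infty\cdot\|M\|\ \le\ \|[c_\Phi\cup c_\Phi]\|_\infty\cdot\frac{\operatorname{vol}(M)}{V},
\]
which rearranges to $\|[c_\Phi\cup c_\Phi]\|_\infty\ge 2V$, where $V$ is the volume of a suitably chosen (``regular'') ideal geodesic $4$--simplex in $\Hc$. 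The first equality is naturality of the cup product and of the comparison map, together with the van Est identification under which $[c_\Phi]$ restricts on $M$ to the de Rham class of $\Phi$: hence the pairing equals $\int_M\Phi\wedge\Phi=2\operatorname{vol}(M)$, since $\tfrac12\Phi\wedge\Phi$ is the Riemannian volume form in complex dimension $2$. The middle inequality is the duality between the $\ell^1$--norm on homology and the sup--norm on bounded cohomology, applied to the pullback of $[c_\Phi\cup c_\Phi]$ to $H^4_b(M;\mathbb{R})$ (whose norm does not exceed that of $[c_\Phi\cup c_\Phi]$). The last inequality is the geometric input: following Thurston's smearing construction \cite{Th1} adapted to $\Hc$, I would average a nearly ideal regular geodesic $4$--simplex of volume close to $V$ over the finite--volume quotient $\Gamma\backslash\mathrm{PU}(2,1)$ to obtain an efficient real fundamental cycle, giving $\|M\|\le\operatorname{vol}(M)/V$. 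With $V=\tfrac{\pi^2}{9}$ this gives $\|[c_\Phi\cup c_\Phi]\|_\infty\ge\tfrac{2}{9}\pi^2$, the remaining inequality.

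Everything thus reduces to the geometric computation $V=\tfrac{\pi^2}{9}$. I would first pin down the extremal ideal $4$--simplex, i.e. the configuration of five points on $\dHc=S^3$ maximizing the volume of the straight simplex they span; here one must remember that $\mathrm{PU}(2,1)$ does not act transitively on generic boundary triples (the Cartan angular invariant obstructs it), so one cannot simply take a ``regular'' simplex with $\Sym$ acting by isometries --- indeed $\Sym$ has no faithful two--dimensional complex representation and so does not embed in $\mathrm{PU}(2,1)$. Then I would integrate the complex hyperbolic volume form over this simplex, working in Siegel domain / Heisenberg group coordinates, where $\Phi$ and the volume form are explicit.

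The hard part will be precisely this last calculation, together with the care needed to make the smearing argument rigorous for a non-compact simplex: truncating the ideal simplex at horospheres, estimating the volume lost under truncation, and checking that the smeared chain is a genuine cycle representing a nonzero multiple of $[M]$ --- this is where the absence of vertex symmetry is most felt, since one cannot cancel the faces of the smeared simplex by an obvious symmetry as in the real hyperbolic case. Conceptually the machinery is classical; the novelty is carrying out the optimization and the volume integral in $\Hc$ by hand.
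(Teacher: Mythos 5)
Your upper bound is essentially the paper's: $|c_\Phi|\le\pi$ on (ideal) triangles (equivalently $|\A|\le\pi/2$ and $c_\Phi=2\A$), submultiplicativity of the cup product at the cochain level, and the fact that alternation does not increase the seminorm give $\|[c_\Phi\cup c_\Phi]\|_\infty\le\pi^2$; no issue there. The lower bound, however, has a genuine gap and is not just a ``hard computation left to do''. Your bookkeeping chain is fine, but by the very proportionality principle you invoke (Proposition \ref{proportionality}, with $\omega=\frac12\,c_\Phi\cup c_\Phi$) the inequality $\|M\|\le\mathrm{Vol}(M)/V$ is \emph{equivalent} to the lower bound $\|[c_\Phi\cup c_\Phi]\|_\infty\ge 2V$ you are trying to prove, so the entire content sits in the smearing step --- and that step fails in $\Hc$ for exactly the reason you flag and then set aside. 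Thurston's proof that the smeared chain is a cycle cancels each codimension-one face of the straight simplex using an orientation-reversing isometry fixing that face pointwise (reflection in the real hyperplane it spans). In $\Hc$ the fixed-point set of an isometry is empty, a point, a complex geodesic, a totally real plane, or all of $\Hc$; there are no totally geodesic real hypersurfaces, hence no isometry fixes a generic real $3$-dimensional geodesic face, and you propose no substitute for the cancellation. If smearing did go through with $V$ the supremal volume of straight $4$-simplices, then together with the easy straightening inequality it would determine $\|M\|$ exactly --- an open problem, and precisely why the paper only obtains bounds. Moreover the input $V=\pi^2/9$ is asserted, not derived: you give neither a construction of an ideal geodesic $4$-simplex of that volume, nor a workable notion of ``regular'' (which, as you note, the Cartan invariant obstructs), nor the integration; there is no such computation in the literature, and in the paper $\tfrac29\pi^2$ does not arise as a simplex volume at all.

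The paper's lower bound is instead purely cohomological. One realizes $H^4_b(\mathrm{PU}(2,1)^\delta;\mathbb{R})$ on $\ell^\infty$-cochains on $\dHc$ (the minimal parabolic is amenable as an abstract group), and Lemma \ref{normlattice} reduces $\|[c_\Phi\cup c_\Phi]\|_\infty$ to $\inf_b\|c_\Phi^\delta\cup c_\Phi^\delta+\delta b\|_{\ell^\infty}$ over bounded cochains invariant only under the Picard lattice $\Gamma=\mathrm{PU}(2,1;\mathbb{Z}[i])$. Using $c_\Phi=2\A$, the alternated cup product is evaluated on two explicit boundary $5$-tuples $\mathbf{p}_1$ and $\mathbf{p}_2$, giving $\pi^2/6$ and $-\pi^2/4$; complex reflections with Gaussian-integer entries (hence in $\Gamma$) force, for every alternating $\Gamma$-invariant $b$, the relations $\delta b(\mathbf{p}_1)=2b(x_+,x_i,y_+,y_i)$ and $\delta b(\mathbf{p}_2)=b(x_+,x_i,y_+,y_i)$ (Lemmas \ref{oct} and \ref{cube}), so that every representative satisfies $\|c_\Phi^\delta\cup c_\Phi^\delta+\delta b\|_{\ell^\infty}\ge\frac13\bigl(c_\Phi^\delta\cup c_\Phi^\delta(\mathbf{p}_1)-2\,c_\Phi^\delta\cup c_\Phi^\delta(\mathbf{p}_2)\bigr)=\frac29\pi^2$. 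To salvage your geometric route you would have to either prove a complex-hyperbolic analogue of the smearing efficiency statement (not currently available) or, as the paper does, convert your candidate configurations into cocycle-level inequalities that no coboundary can destroy.
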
 

As $c_\Phi\cup c_\Phi$ is proportional to the image under the van Est isomorphism of the volume form in $\Omega^4(\Hc;\mathbb{R})^{\mathrm{PU}(2,1)}$ applying the proportionality principle gives explicit bounds for the simplicial volume $\| M\|$ of a closed complex hyperbolic surface $M$:
\begin{Th}
\label{simplicialvolume}
Let $M$ be a closed oriented manifold which is locally isometric to $\Hc$. Then
\[
\frac{2}{\pi^2}\mathrm{Vol}(M)\leq \|M\|\leq \frac{9}{\pi^2}\mathrm{Vol}(M).
\]
\end{Th}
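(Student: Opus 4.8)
The plan is to derive the statement from Theorem~\ref{kahler} by invoking Gromov's proportionality principle together with the explicit constant relating $[c_{\Phi}\cup c_{\Phi}]$ to the Riemannian volume form. Write $G=\mathrm{PU}(2,1)$ for the group of holomorphic isometries of $\Hc$, let $n=4=\dim_{\mathbb{R}}\Hc$, and let $\omega\in\Omega^{4}(\Hc;\mathbb{R})^{G}$ denote the Riemannian volume form, with orientation fixed so that $\langle[\omega],[N]\rangle=\mathrm{Vol}(N)>0$ for every closed oriented $N$ locally isometric to $\Hc$. Since $\Hc$ is contractible every such $N$ is aspherical, so $H_{b}^{*}(N;\mathbb{R})\cong H_{b}^{*}(\pi_{1}N;\mathbb{R})$, and $\|N\|$ is by definition the $\ell^{1}$-seminorm of the real fundamental class $[N]\in H_{4}(N;\mathbb{R})$.

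The first step is to note that $[\omega]$ lifts to a \emph{bounded} continuous class: the straight-simplex cocycle $(g_{0},\dots,g_{4})\mapsto\int_{\Delta(g_{0}o,\dots,g_{4}o)}\omega$ (for a fixed basepoint $o\in\Hc$) is bounded because geodesic simplices in $\Hc$ have uniformly bounded volume (Gromov~\cite{Gr}; see also Lafont--Schmidt~\cite{LS}), and under the van Est isomorphism its class $\omega_{b}\in H_{c,b}^{4}(G;\mathbb{R})$ maps to $[\omega]\in H_{c}^{4}(G;\mathbb{R})$. The second step is the quantitative form of the proportionality principle: for every closed oriented $M$ locally isometric to $\Hc$,
\[
\|M\|=\frac{\mathrm{Vol}(M)}{\|\omega_{b}\|_{\infty}} .
\]
The inequality $\|M\|\ge\mathrm{Vol}(M)/\|\omega_{b}\|_{\infty}$ is the elementary duality estimate: straightening the simplices of an almost-efficient real fundamental cycle and pairing it with $[\omega]$ gives $\mathrm{Vol}(M)=\langle[\omega],[M]\rangle\le\|M\|\cdot\|\omega_{b}\|_{\infty}$. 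The reverse inequality is Thurston's smearing construction performed over $G$ with its Haar measure, carried out for locally symmetric spaces by Bucher--Karlsson after \cite{Gr} and \cite{Th1}; I would cite this package rather than reprove it.

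The third step identifies the normalization. Van Est is a ring isomorphism $H_{c}^{*}(G;\mathbb{R})\xrightarrow{\ \cong\ }\Omega^{*}(\Hc)^{G}$, and by the normalization of the K\"ahler class it sends $c_{\Phi}$ to the K\"ahler form $\omega_{K}$ of the metric of holomorphic curvature $-1$, hence $c_{\Phi}\cup c_{\Phi}$ to $\omega_{K}\wedge\omega_{K}$. For a K\"ahler surface $\omega_{K}\wedge\omega_{K}=2\,\omega$, so in $H_{c}^{4}(G;\mathbb{R})\cong\mathbb{R}$ one obtains $[c_{\Phi}\cup c_{\Phi}]=2[\omega]$, i.e.\ the bounded class $[c_{\Phi}\cup c_{\Phi}]$ equals $2\,\omega_{b}$, whence $\|\omega_{b}\|_{\infty}=\tfrac12\|[c_{\Phi}\cup c_{\Phi}]\|_{\infty}$. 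Substituting into the displayed formula gives $\|M\|=2\,\mathrm{Vol}(M)/\|[c_{\Phi}\cup c_{\Phi}]\|_{\infty}$, and the bounds $\tfrac29\pi^{2}\le\|[c_{\Phi}\cup c_{\Phi}]\|_{\infty}\le\pi^{2}$ of Theorem~\ref{kahler} yield exactly $\tfrac{2}{\pi^{2}}\mathrm{Vol}(M)\le\|M\|\le\tfrac{9}{\pi^{2}}\mathrm{Vol}(M)$.

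I do not expect a genuine obstacle here, since the analytic content lies entirely in Theorem~\ref{kahler}; the care required is twofold. First, one must invoke the proportionality principle in the form that computes the proportionality constant as the reciprocal of the $\ell^{\infty}$-seminorm on $H_{c,b}^{*}(G;\mathbb{R})$ rather than on $H_{b}^{*}(M;\mathbb{R})$ (where restriction could a priori shrink the norm); this is Bucher--Karlsson's version and relies on uniform boundedness of geodesic-simplex volumes together with the smearing construction, and one should also check that the bounded lift of $[\omega]$ relevant there is indeed $\tfrac12[c_{\Phi}\cup c_{\Phi}]$, e.g.\ via injectivity of the comparison map $H_{c,b}^{4}(G;\mathbb{R})\to H_{c}^{4}(G;\mathbb{R})$. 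Second, one must correctly track that $c_{\Phi}$ corresponds to $\omega_{K}$ under van Est, so that the factor between $c_{\Phi}\cup c_{\Phi}$ and the volume class is $2$; a misplaced constant here is the only thing that would corrupt the final bounds.
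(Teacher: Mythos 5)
Your proposal is correct and follows essentially the same route as the paper: invoke Bucher's proportionality principle $\|M\|=\mathrm{Vol}(M)/\|\omega\|_\infty$ for the volume class in $H_c^4(\mathrm{PU}(2,1);\mathbb{R})$, identify $\omega=\tfrac12\,c_\Phi\cup c_\Phi$ via multiplicativity of the van Est isomorphism and the relation $\mathrm{vol}=\tfrac12\Phi\wedge\Phi$, and then substitute the bounds of Theorem~\ref{kahler}. The extra care you take about the bounded lift and the norm being computed in $H_{c,b}^4(G;\mathbb{R})$ rather than $H_b^4(M;\mathbb{R})$ is exactly what the cited proportionality result provides, so there is no gap.
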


From the Hirzebruch proportionality principle relating the volume of $M$ to its Euler characteristic (\cite{Hi}) we get bounds for the simplicial volume of $M$ in terms of its Euler characteristic:
\begin{Cor}
\label{ineq_chi}
Let $M$ be a closed oriented manifold which is locally isometric to $\Hc$. Then
\[
\frac{16}{3}\chi(M)\leq \|M\|\leq 24\chi(M). 
\]
\end{Cor}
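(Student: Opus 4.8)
The plan is to combine Theorem~\ref{simplicialvolume} with the Hirzebruch proportionality principle; the only geometric ingredient required is the precise ratio between the Riemannian volume and the Euler characteristic of a closed complex hyperbolic surface. First I would record this ratio. Since $M$ is locally isometric to $\Hc$, its Euler form (and all its Pontryagin forms) are parallel, hence constant multiples of the Riemannian volume form; thus every characteristic number of $M$ is a universal constant times $\mathrm{Vol}(M)$, and these constants can be computed on the compact dual $\mathbb{CP}^2$ --- this is exactly the proportionality principle of \cite{Hi}. Applying Chern--Gauss--Bonnet to $M$, with the normalization used throughout the paper (holomorphic sectional curvature $-1$), this gives
\[
\mathrm{Vol}(M)=\frac{8\pi^2}{3}\,\chi(M),
\]
and in particular $\chi(M)>0$. (Equivalently, by Bogomolov--Miyaoka--Yau the Chern numbers of a ball quotient satisfy $c_1^2=3c_2$, and $c_2[M]=\chi(M)$ is a fixed multiple of the volume.)

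With this identity in hand the corollary is purely formal: substituting into the two inequalities of Theorem~\ref{simplicialvolume} yields
\[
\frac{16}{3}\,\chi(M)=\frac{2}{\pi^2}\cdot\frac{8\pi^2}{3}\,\chi(M)\;\leq\;\|M\|\;\leq\;\frac{9}{\pi^2}\cdot\frac{8\pi^2}{3}\,\chi(M)=24\,\chi(M),
\]
and since $\chi(M)>0$ the direction of the inequalities is preserved. That is the claim.

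The one point that needs care is fixing the constant $\tfrac{8\pi^2}{3}$ in the volume--Euler characteristic identity, because the literature on $\Hc$ uses several mutually incompatible curvature normalizations (holomorphic sectional curvature $-1$ or $-4$; sectional curvature pinched in $[-1,-\tfrac14]$ or $[-4,-1]$; and so on), each of which rescales the volume by a different factor. The unambiguous way to pin the constant down is to require consistency with the passage from Theorem~\ref{kahler} to Theorem~\ref{simplicialvolume}: that passage already invokes the proportionality principle with $c_\Phi\cup c_\Phi$ representing a fixed multiple of the volume class, so the Gauss--Bonnet computation must be carried out in precisely the same normalization, which is the one producing $\tfrac{8\pi^2}{3}$. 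Beyond this bookkeeping no further estimates enter --- the content of the corollary is carried entirely by Theorem~\ref{simplicialvolume} together with \cite{Hi}.
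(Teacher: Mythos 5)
Your proposal is correct and follows essentially the same route as the paper: substitute the Hirzebruch proportionality identity $\mathrm{Vol}(M)=\frac{8}{3}\pi^2\chi(M)$, obtained by comparing with the compact dual $\mathbb{C}P^2$, into the two inequalities of Theorem~\ref{simplicialvolume}. The only difference is that the paper pins down the constant by explicitly computing $\chi(\mathbb{C}P^2)=3$ and $\mathrm{Vol}(\mathbb{C}P^2)=8\pi^2$ (Fubini--Study metric normalized to holomorphic sectional curvature $+1$, i.e.\ $S^5(2)/S^1(2)$), whereas you assert the value and justify the normalization by consistency; writing out that short computation would close the one loose end in your argument.
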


The following Milnor-Wood inequality immediately follows:
\begin{Cor}
\label{MW}
Let $\xi$ be a flat $\mathrm{GL}^+(4,\mathbb{R})$-bundle over a closed complex hyperbolic surface $M$. Then
\[
|\chi(\xi)|\leq \frac{3}{2}\chi(M).
\]
\end{Cor}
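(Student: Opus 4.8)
The plan is to combine the classical Sullivan--Smillie bound on the $\ell^\infty$-seminorm of the universal Euler class of flat bundles with the upper bound of Corollary~\ref{ineq_chi}. Recall that for flat oriented rank-$2k$ real vector bundles the Euler class, viewed as a class $e\in H^{2k}(\mathrm{GL}^+(2k,\mathbb{R})^\delta;\mathbb{R})$ in the cohomology of the \emph{discrete} group, is bounded and satisfies $\|e\|_\infty\leq 2^{-2k}$ (Sullivan, Smillie; see also Gromov and Ivanov--Turaev). In particular, for $2k=4$ one has $\|e\|_\infty\leq\tfrac{1}{16}$.

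Now let $\xi$ be a flat $\mathrm{GL}^+(4,\mathbb{R})$-bundle over $M$. Because $\xi$ is flat, its classifying map factors as $M\to B\mathrm{GL}^+(4,\mathbb{R})^\delta\to B\mathrm{GL}^+(4,\mathbb{R})$, so the Euler class $e(\xi)\in H^4(M;\mathbb{R})$ is the image of the bounded class $e$ under the induced map; hence $e(\xi)$ is a bounded class on $M$ with $\|e(\xi)\|_\infty\leq\|e\|_\infty\leq\tfrac{1}{16}$. The Euler number is the Kronecker pairing $\chi(\xi)=\langle e(\xi),[M]\rangle$, and by the duality between the $\ell^\infty$-seminorm on bounded cohomology and the Gromov norm on homology,
\[
|\chi(\xi)|=\bigl|\langle e(\xi),[M]\rangle\bigr|\leq\|e(\xi)\|_\infty\,\|M\|\leq\tfrac{1}{16}\,\|M\|.
\]

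Finally, applying the right-hand inequality of Corollary~\ref{ineq_chi}, namely $\|M\|\leq 24\,\chi(M)$, gives
\[
|\chi(\xi)|\leq\tfrac{1}{16}\cdot 24\,\chi(M)=\tfrac{3}{2}\,\chi(M),
\]
as claimed. The only ingredient here that is not a formal consequence of Corollary~\ref{ineq_chi} is the bound $\|e\|_\infty\leq 2^{-2k}$ on the universal Euler class together with the fact that flatness lets us pull $e$ back as a bounded class; this is the classical Milnor--Wood phenomenon, so I expect no genuine obstacle beyond correctly bookkeeping the constant $24/16=3/2$ and being careful that $\chi(M)>0$ for a closed complex hyperbolic surface so that the stated inequality is non-vacuous.
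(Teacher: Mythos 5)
Your proposal is correct and follows essentially the same route as the paper: pair the Euler class with the fundamental class, bound $\|\varepsilon_4(\xi)\|_\infty\leq 1/16$ by the Ivanov--Turaev (Sullivan--Smillie) estimate for flat bundles, and combine with the upper bound $\|M\|\leq 24\chi(M)$ of Corollary~\ref{ineq_chi}. The only cosmetic difference is that the paper invokes the equality $|\langle\beta,[M]\rangle|=\|\beta\|_\infty\cdot\|M\|$ for top-dimensional classes, while you only use the inequality, which suffices.
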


This paper is structured as follows: In Section 2 we recall the necessary background on complex hyperbolic geometry, continuous (bounded) cohomology and simplicial volume. 
In Section 3 we show in detail how Theorem \ref{simplicialvolume}, Corollary \ref{ineq_chi} and Corollary \ref{MW} follow from Theorem \ref{kahler}. Finally, in Section 4 we give the proof of Theorem \ref{kahler}. 

\subsection*{Acknowledgement}
The author would like to thank her doctoral advisor Michelle Bucher for introducing her to the subject and for many helpful discussions and advice.

\section{Preliminaries}
\label{Preliminaries}
\subsection{Complex hyperbolic plane}
\label{Complex hyperbolic plane}
Let $\mathbb{C}^{2,1}$ be the complex vector space $\mathbb{C}^3$ equipped with the Hermitian form 
\[
\la\mathbf{z},\mathbf{w}\ra:=z_1\overline{w_1}+z_2\overline{w_2}-z_3\overline{w_3}.
\]
Since $\la \mathbf{z},\mathbf{z}\ra$ is real for all $\mathbf{z}\in\mathbb{C}^{2,1}$ we can define the following subsets of $\mathbb{C}^{2,1}$:
\begin{align*}
&V_-:=\left\{\mathbf{z}\in\mathbb{C}^{2,1}\middle| \la \mathbf{z},\mathbf{z}\ra<0\right\},\\
&V_0:=\left\{\mathbf{z}\in\mathbb{C}^{2,1}\setminus\{0\}\middle| \la \mathbf{z},\mathbf{z}\ra=0\right\},\\
&V_+:=\left\{\mathbf{z}\in\mathbb{C}^{2,1}\middle| \la \mathbf{z},\mathbf{z}\ra>0\right\}.
\end{align*}
Denote by $\mathbb{P}$ the canonical projection of $\mathbb{C}^{2,1}\setminus\{0\}$ onto $\mathbb{C}P^2$. The \textit{projective model} of complex hyperbolic $n$-space $\Hc$ is then defined to be $\mathbb{P}(V_-)$ and the boundary, $\dHc$, is defined to be $\mathbb{P}(V_0)$. The metric on $\Hc$ is defined by 
\[
\cosh^2\left(\frac{1}{2}d({z},{w})\right)=\frac{\la \bf{z},\bf{w}\ra\la w,z\ra}{\la \bf{z},z\ra\la \bf{w},w\ra},
\]
with $d(\cdot,\cdot)$ the distance function. With this scaling $\Hc$ has holomorphic sectional curvature $-1$, with real sectional curvature pinched between $-1$ and $-1/4$. The holomorphic isometry group of $\Hc$ is $\mathrm{PU}(2,1)$ while its full isometry group $\widehat{\mathrm{PU}(2,1)}$ is generated by $\mathrm{PU}(2,1)$ and complex conjugation.

Let $\eta\in\mathrm{U}(1)$ and let $L$ be a complex line, i.e. the intersection of a complex line in $\mathbb{C}P^2$ with $\Hc$. The \textit{complex reflection in $L$ with reflection factor $\eta$} is the map $\rho_L^\eta:\mathbb{C}^{2,1}\to\mathbb{C}^{2,1}$ defined by 
\[
\mathbf{z}\mapsto \mathbf{z}+(\eta-1)\frac{\la\mathbf{z},\mathbf{c}\ra}{\la\mathbf{c},\mathbf{c}\ra}\mathbf{c},
\]
for $\mathbf{z}\in\mathbb{C}^{2,1}$ and where $\mathbf{c}$ is a polar vector of $L$, i.e. $L=\mathbb{P}(\mathbf{c}^\perp)\cap\Hc$. If $\eta=-1$ we simply call this map the \textit{complex reflection in $L$}. 
\\\\
The \textit{Cartan angular invariant} $\A$ of a triple $(p_1,p_2,p_3)\in(\dHc)^3$ is by definition
\[
\A(p_1,p_2,p_3)=\arg(-\la\mathbf{p_1},\mathbf{p_2}\ra\la \mathbf{p_2},\mathbf{p_3}\ra\la\mathbf{p_3},\mathbf{p_1}\ra).
\]
It characterizes triples in $\dHc$ up to the action by $\mathrm{PU}(2,1)$ (\cite[Theorem 7.1.1]{Gol}). 
In the following Lemma we list some useful and elementary properties of $\A$.
\begin{Le}{\cite[Section 7.1]{Gol}}
The Cartan angular invariant $\A$ has the following properties
\begin{enumerate}
\item $\A$ is alternating: If $\sigma\in \mathrm{Sym}(3)$ then 
\[
\A(p_{\sigma(1)},p_{\sigma(2)},p_{\sigma(3)})=\mathrm{sign}(\sigma)\A(p_1,p_2,p_3).
\]
\item If $g\in \mathrm{PU}(n,1)$ is a holomorphic automorphism then 
\[
\A(gp_1,gp_2,gp_3)=\A(p_1,p_2,p_3),
\]
and if $g$ is an anti-holomorphic automorphism then 
\[
\A(gp_1,gp_2,gp_3)=-\A(p_1,p_2,p_3).
\]
\end{enumerate}
\end{Le}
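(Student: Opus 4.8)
The plan is to argue directly from the defining formula
\[
\A(p_1,p_2,p_3)=\arg\bigl(P(p_1,p_2,p_3)\bigr),\qquad P(p_1,p_2,p_3):=-\la\mathbf{p_1},\mathbf{p_2}\ra\la\mathbf{p_2},\mathbf{p_3}\ra\la\mathbf{p_3},\mathbf{p_1}\ra,
\]
reducing both statements to elementary properties of the Hermitian form. The first thing to record is that $P$ is well defined up to a positive real factor: rescaling the lifts $\mathbf{p_i}\mapsto\lambda_i\mathbf{p_i}$ with $\lambda_i\in\mathbb{C}^\ast$ multiplies $P$ by $|\lambda_1|^2|\lambda_2|^2|\lambda_3|^2>0$, so $\arg(P)$, and hence $\A$, does not depend on the choice of lifts, and all subsequent manipulations may be carried out on arbitrary representatives in $V_0$. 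One also uses that for a triple of points on $\dHc$ the value $P$ lies in the closed right half-plane (this is the content of $\A\in[-\pi/2,\pi/2]$), so that $\arg$ is unambiguous here and satisfies $\arg(\overline{P})=-\arg(P)$.

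For part (1), since $\mathrm{sign}\colon\mathrm{Sym}(3)\to\{\pm1\}$ is a homomorphism it suffices to verify the identity for one transposition and one $3$-cycle. A cyclic permutation of the arguments only reorders the three factors in $P$, leaving $P$ and therefore $\A$ unchanged, which settles the $3$-cycle. For the transposition exchanging $p_1$ and $p_2$, the Hermitian symmetry $\la\mathbf{w},\mathbf{z}\ra=\overline{\la\mathbf{z},\mathbf{w}\ra}$ gives $P(p_2,p_1,p_3)=\overline{P(p_1,p_2,p_3)}$, and applying $\arg(\overline{z})=-\arg(z)$ yields the sign flip. Together these two computations force the alternating identity for every $\sigma\in\mathrm{Sym}(3)$.

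For part (2), a holomorphic $g\in\mathrm{PU}(n,1)$ lifts to a matrix $\tilde g\in\mathrm{U}(n,1)$ which by definition preserves $\la\cdot,\cdot\ra$; choosing $\tilde g\mathbf{p_i}$ as lifts of $gp_i$ leaves every factor of $P$ unchanged, so $\A(gp_1,gp_2,gp_3)=\A(p_1,p_2,p_3)$. An anti-holomorphic isometry is, by the description of $\widehat{\mathrm{PU}(2,1)}$ recalled above, of the form $g_0\circ\kappa$ with $g_0\in\mathrm{PU}(2,1)$ holomorphic and $\kappa$ the coordinatewise complex conjugation $\mathbf{z}\mapsto\overline{\mathbf{z}}$; since $\la\overline{\mathbf{z}},\overline{\mathbf{w}}\ra=\overline{\la\mathbf{z},\mathbf{w}\ra}$, precomposing with $\kappa$ replaces $P$ by $\overline{P}$ and hence negates $\A$, and combining this with the holomorphic case gives the claimed sign. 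There is no genuine obstacle in this argument — it is pure bookkeeping with the Hermitian form — and the only points deserving a moment's attention are the scaling invariance that makes $\A$ well defined and the verification that $\arg$ behaves as expected on the relevant range of values.
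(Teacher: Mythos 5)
Your proof is correct. Note that the paper itself gives no argument for this lemma --- it simply cites Section 7.1 of Goldman's book --- and your computation is essentially the standard one found there: reduce everything to the Hermitian triple product $P(p_1,p_2,p_3)=-\la\mathbf{p_1},\mathbf{p_2}\ra\la\mathbf{p_2},\mathbf{p_3}\ra\la\mathbf{p_3},\mathbf{p_1}\ra$, observe that it is well defined up to a positive scalar, that cyclic permutations fix it, that a transposition (or the conjugation $\mathbf{z}\mapsto\overline{\mathbf{z}}$) replaces it by $\overline{P}$, and that lifts to $\mathrm{U}(n,1)$ leave it unchanged. Two small points you could make explicit: the identity $\arg(\overline{z})=-\arg(z)$ only needs $P$ to avoid the negative real axis, which is guaranteed by (and is weaker than) the standard fact $\A\in[-\pi/2,\pi/2]$ that you quote; and for degenerate triples (two coinciding boundary points) one has $P=0$, so the stated identities hold only under the usual convention $\A=0$ there --- a harmless caveat, since the invariant is normally defined for distinct triples.
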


\subsection{The volume form in $H_{c,b}^*(\mathrm{PU}(2,1),\mathbb{R})$.}
\subsubsection{Continuous (bounded) cohomology}
Let $G$ be a topological group and $A$ a Banach G-module. We will consider abstract groups as topological groups with respect to the discrete topology. Let
\[
C_{c,b}^p(G;A):=\{f:G^{p+1}\to A\mid f \text{ continuous and bounded}\},
\]
with the $G$-action given by
\[
(g\cdot f)(g_0,\dots,g_p):=g\cdot(f(g^{-1}g_0,\dots, g^{-1}g_p)).
\]
We denote by $C_{c,b}^p(G;A)^G$ the space of $G$-invariant functions. Let 
\[
\delta:C_{c,b}(G^{p+1};A)^G\to C_{c,b}(G^{p+2};A)^G
\]
be the standard homogeneous coboundary operator, i.e. for a cochain $\alpha\in C_{c,b}(G^{p+1};A)^G$ and $g_0,\dots,g_{p+1}\in G$
\begin{eqnarray*}
\delta\alpha(g_0,\dots,g_{p+1}):= \sum_{i=0}^{p+1} (-1)^i \alpha(g_0,\dots,\hat{g_i},\dots,g_{p+1}).
\end{eqnarray*}
The complex $(C_{c,b}^p(G;A),\delta)$ is called the \textit{bounded homogeneous resolution}.  We define the \textit{continuous bounded cohomology groups} as the cohomology of this complex, i.e.
\[
H_{c,b}^p(G;A):=\frac{\ker(\delta:C_{c,b}^p(G;A)^G\to C_{c,b}^{p+1}(G;A)^G)}{\text{im}(\delta:C_{c,b}^{p-1}(G;A)^G\to C_{c,b}^p(G;A)^G)}.
\]
Forgetting about the boundedness condition, i.e. considering the $G$-modules 
\[
C_{c}^p(G;A):=\{f:G^{p+1}\to A\mid f \text{ continuous}\},
\]
we obtain the \textit{homogeneous resolution}   $(C_{c}^p(G;A),\delta)$ and the \textit{continuous cohomology groups}
\[
H_c^p(G;A):=\frac{\ker(\delta:C_c^p(G;A)^G\to C_c^{p+1}(G;A)^G)}{\text{im}(\delta:C_c^{p-1}(G;A)^G\to C_c^p(G;A)^G)}. 
\]
For more details on continous (bounded) cohomology see \cite{BW}, \cite{Gui} and \cite{Mon1}. 
\\\\
The supremum norm on $C_{c,b}^p(G;A)^G$ induces a seminorm $\|\cdot\|_\infty$ on $H^p_{c,b}(G;A)$ defined by taking the infimum over all supremum norms, i.e. 
\[
\|[\beta]\|_\infty:= \mathrm{inf}_{f\in[\beta]}\mathrm{sup}_{\bar{g}\in G^{p+1}} f(\bar{g}). 
\]
This norm is called the \textit{Gromov norm}. 
\\\\
Let now $G$ be a semisimple Lie group with finite center and no compact factors and let $K$ be its maximal compact subgroup. We denote by $\mathcal{X}$ the associated symmetric space. An important resolution for continuous cohomology is given by the complex of $G$-invariant differential forms $(\Omega^*(\mathcal{X};\mathbb{R})^G,d)$. An isomorphism with the standard homogeneous resolution is provided by the explicit description on the cochain level of the van Est isomorphism by Dupont:

\begin{Th}[van Est isomorphism]
The continuous cohomology of $G$ with real coefficients is isomorphic to $\Omega^*(\mathcal{X};\mathbb{R})^G$. An explicit description on the cocycle level sends the differential form $\omega\in\Omega^p(\mathcal{X};\mathbb{R})^G$ to the cocycle $c_\omega\in C_c(G^{p+1};\mathbb{R})$ defined by
\[
c_\omega(g_0,\dots,g_p)=\int_{\Delta(g_0x,\dots,g_px)}\omega,
\]
for any fixed basepoint $x$ in $\mathcal{X}$. Here $\Delta(g_0x,\dots,g_px)$ is the ``geodesic coned simplex" with vertices $g_0x,\dots,g_px$ defined inductively: The simplex $\Delta(g_0x,g_1x)$ is the geodesic segment from $g_0x$ to $g_1x$ and given $\Delta(g_0x,\dots, g_ix)$ the simplex $\Delta(g_0,\dots, g_ix,g_{i+1}x)$ is the union of all geodesic segments from $g_{i+1}x$ to the points of $\Delta(g_0x,\dots, g_ix)$.
\end{Th}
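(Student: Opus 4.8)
The plan is to regard the asserted isomorphism as the composition of van Est's theorem with the identification of relative Lie algebra cohomology with invariant forms on the symmetric space, and then to verify that the integration map $\Phi\colon\omega\mapsto c_\omega$ realizes it at the cochain level. I would first fix notation: let $\mathfrak{g}=\mathfrak{k}\oplus\mathfrak{p}$ be the Cartan decomposition of $G$, so that $\mathfrak{p}\cong T_x\mathcal{X}$ at the chosen basepoint $x$, and recall that every $G$-invariant differential form on the symmetric space $\mathcal{X}=G/K$ is parallel, hence closed and coclosed. In particular the differential of $(\Omega^\bullet(\mathcal{X};\mathbb{R})^G,d)$ vanishes identically, so the claim reduces to the two assertions that $\Phi$ is a morphism of cochain complexes $\big(\Omega^\bullet(\mathcal{X};\mathbb{R})^G,d\big)\to\big(C_c^\bullet(G;\mathbb{R})^G,\delta\big)$, and that it induces an isomorphism $\Omega^n(\mathcal{X};\mathbb{R})^G\xrightarrow{\ \sim\ }H_c^n(G;\mathbb{R})$ for all $n$.

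The first assertion is the routine part. That $c_\omega$ is $G$-invariant follows because $G$ acts by isometries and the coning construction commutes with isometries, so $g\cdot\Delta(g_0x,\dots,g_px)=\Delta(gg_0x,\dots,gg_px)$; together with $g^\ast\omega=\omega$ and the change of variables formula this gives $c_\omega(gg_0,\dots,gg_p)=c_\omega(g_0,\dots,g_p)$. Continuity --- in fact smoothness --- of $(g_0,\dots,g_p)\mapsto c_\omega(g_0,\dots,g_p)$ holds because the vertices, the coned simplex and hence the integral depend smoothly on the parameters. For the cochain map property one first records the chain-level identity
\[
\partial\,\Delta(p_0,\dots,p_{p+1})=\sum_{i=0}^{p+1}(-1)^i\,\Delta(p_0,\dots,\widehat{p_i},\dots,p_{p+1}),
\]
proved by induction on $p$ from the recursion $\Delta(p_0,\dots,p_{p+1})=\mathrm{Cone}_{p_{p+1}}\Delta(p_0,\dots,p_p)$ together with $\partial\,\mathrm{Cone}_v(\sigma)=\sigma-\mathrm{Cone}_v(\partial\sigma)$; Stokes' theorem then gives $\delta c_\omega=c_{d\omega}$, and since $d\omega=0$ this also shows that $c_\omega$ is a cocycle.

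For the isomorphism I would use a double complex. Set $C^{p,q}:=C_c^p\big(G;\Omega^q(\mathcal{X};\mathbb{R})\big)^G$, the $G$-invariant continuous maps $G^{p+1}\to\Omega^q(\mathcal{X};\mathbb{R})$, with horizontal differential the homogeneous coboundary $\delta$ and vertical differential the exterior derivative $d$. For fixed $q$, the row $(C^{\bullet,q},\delta)$ is the homogeneous complex computing $H_c^\bullet\big(G;\Omega^q(\mathcal{X};\mathbb{R})\big)$; since $K$ is compact, the $G$-module $\Omega^q(\mathcal{X};\mathbb{R})$ --- smoothly induced from $K$ --- is relatively injective, so this cohomology is concentrated in degree $0$ and equals $\Omega^q(\mathcal{X};\mathbb{R})^G$. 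For fixed $p$, the column $(C^{p,\bullet},d)$ is obtained by applying the functor $C_c^p(G;-)^G$ to the de Rham complex of the contractible manifold $\mathcal{X}$; using the identification $C_c^p(G;V)^G\cong C(G^p;V)$ and the continuous contracting homotopy furnished by the Poincaré lemma, the column has cohomology concentrated in degree $0$, equal to $C_c^p(G;\mathbb{R})^G$. Hence both spectral sequences of the bicomplex collapse and yield canonical isomorphisms
\[
\Omega^n(\mathcal{X};\mathbb{R})^G\;\xrightarrow{\ \sim\ }\;H^n\big(\mathrm{Tot}\,C^{\bullet,\bullet}\big)\;\xleftarrow{\ \sim\ }\;H_c^n(G;\mathbb{R}).
\]
It remains to check that the composite isomorphism $\Omega^n(\mathcal{X};\mathbb{R})^G\to H_c^n(G;\mathbb{R})$ is, up to sign, the map induced by $\Phi$. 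This is done by chasing a closed invariant form $\omega$ down the staircase of the bicomplex: at each step one applies the explicit Poincaré homotopy, which integrates along geodesics emanating from the basepoint, while the iterated coboundaries and the $G$-equivariance introduce the translated basepoints; after reindexing, the geodesic segments swept out in this process are exactly those assembled into $\Delta(g_0x,\dots,g_nx)$, so the resulting bottom-row representative is $(g_0,\dots,g_n)\mapsto\int_{\Delta(g_0x,\dots,g_nx)}\omega$.

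I expect this last identification to be the main obstacle: matching Dupont's global integral cocycle with the zig-zag through the bicomplex --- equivalently, differentiating $c_\omega$ at $(e,\dots,e)$ and recognizing its leading term as the van Est cochain evaluating $\omega_x$ on $\bigwedge^n\mathfrak{p}$ --- requires careful bookkeeping with orientations, signs and the interplay of the two homotopy operators. This is precisely the content of Dupont's theorem, and in the paper one simply invokes it. Finally, independence of the basepoint is automatic: two basepoints $x,x'$ give chain-homotopic cochain maps (the homotopy being built from the prisms swept out as $x$ is dragged to $x'$ along a geodesic), so the induced isomorphism on cohomology does not depend on the choice of $x$.
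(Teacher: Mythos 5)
This theorem is quoted in the paper as background (van Est's isomorphism together with Dupont's explicit description at the cochain level) and is not proved there, so there is no internal argument to compare yours against; what you have written is essentially the classical proof from the literature. Your outline is sound: invariant forms on a symmetric space are closed, so the differential of $\Omega^\bullet(\mathcal{X};\mathbb{R})^G$ vanishes; the cone boundary formula plus Stokes and the equivariance of geodesic coning under isometries show that $\omega\mapsto c_\omega$ is a $G$-equivariant cochain map; and the bicomplex $C_c^p\bigl(G;\Omega^q(\mathcal{X};\mathbb{R})\bigr)^G$, with rows acyclic because the coefficients are induced from the compact group $K$ and columns acyclic by the Poincar\'e lemma with continuous contracting homotopy, gives $\Omega^n(\mathcal{X};\mathbb{R})^G\cong H_c^n(G;\mathbb{R})$.

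The one genuine gap is the step you flag yourself: identifying the edge isomorphism of the bicomplex with the map induced by $\omega\mapsto c_\omega$, which you propose to settle by a staircase chase and then concede is ``precisely the content of Dupont's theorem''; invoked in that form, the argument is circular exactly at the point that matters. It can be closed without any sign bookkeeping: the integration map is defined on all forms, not only invariant ones, it is $G$-equivariant, it commutes with the differentials by your Stokes computation, and in degree zero it sends a function $f$ to $g_0\mapsto f(g_0x)$, so it is a morphism of augmented resolutions of the trivial module $\mathbb{R}$. Since both $(\Omega^\bullet(\mathcal{X};\mathbb{R}),d)$ and the homogeneous complex $(C_c^\bullet(G;\mathbb{R}),\delta)$ are strong resolutions of $\mathbb{R}$ by relatively injective (continuously induced) $G$-modules, any such morphism extending the identity of $\mathbb{R}$ induces the canonical isomorphism on the cohomology of the invariant subcomplexes --- that is, it realizes the van Est isomorphism automatically. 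Basepoint independence also comes for free, since changing $x$ alters the morphism of resolutions by a $G$-equivariant homotopy (your prism construction), although your direct argument is fine as well.
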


\subsubsection{The volume form}
Let $\Phi$ be the K\"ahler form on $\Hnc$ and let $(p_1,p_2,p_3)\in(\Hnc\cup\dHnc)^3$. 
Define
\[
c_\Phi(p_1,p_2,p_3) =\int_{\Delta(p_1,p_2,p_3)} \Phi.
\]
Fix a base point $x\in Hnc$. Then for a triple of points in $\Hnc$ the function $c_\Phi$ is equal to the image of $\Phi$ under the van Est isomorphism as defined above evaluated at the point $(g_1,g_2,g_3)$ with $g_i$ such that $x=g_i^{-1}p_i$. For a triple of points in the boundary one can take the limit $x\to\xi$ with $\xi\in\dHnc$ and the cocycle $c_\Phi$ will still represent the same class. From now on we will consider $c_\Phi$ as a map $(\dHnc)^3\to\mathbb{R}$. We clearly have

\begin{Le}
\label{c is cocycle}
$c_\Phi$ is a cocycle, i.e.
\begin{equation}
\label{c cocycle}
 c_\Phi(p_1,p_2,p_3)-c_\Phi(p_1,p_2,p_4)+c_\Phi(p_1,p_3,p_4)-c_\Phi(p_2,p_3,p_4)=0,
 \end{equation}
 for all $(p_1,p_2,p_3,p_4)\in(\dHnc)^4$. 
\end{Le}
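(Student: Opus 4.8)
The plan is to prove that $c_\Phi$ satisfies the cocycle identity \eqref{c cocycle} by exhibiting the left-hand side as the integral of the closed form $\Phi$ over the boundary of a $3$-chain, and then invoking Stokes' theorem together with the fact that $d\Phi = 0$. Concretely, I would first recall that the geodesic coned simplex $\Delta(p_1,p_2,p_3)$ is the (possibly degenerate) singular $2$-simplex obtained by the inductive coning construction from the van Est isomorphism, and that the analogous coned $3$-simplex $\Delta(p_1,p_2,p_3,p_4)$ has, up to orientation, the four faces $\Delta(p_2,p_3,p_4)$, $\Delta(p_1,p_3,p_4)$, $\Delta(p_1,p_2,p_4)$, $\Delta(p_1,p_2,p_3)$. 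This is the standard simplicial boundary relation, and the signs $(-1)^i$ match the alternating signs appearing in \eqref{c cocycle}.

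Next I would justify that all the integrals in question are finite even though the vertices lie on $\dHnc$ rather than in $\Hnc$: this is exactly the passage ``one can take the limit $x\to\xi$'' mentioned just before the statement, and it rests on the fact that the Kähler form $\Phi$ has bounded comass on $\Hnc$ (indeed $c_\Phi$ is bounded, which is implicit in the whole setup since we later discuss $\|[c_\Phi\cup c_\Phi]\|_\infty$), so that the integral over the coned simplex extends continuously as the vertices tend to the boundary. Having set this up, the identity
\[
c_\Phi(p_1,p_2,p_3)-c_\Phi(p_1,p_2,p_4)+c_\Phi(p_1,p_3,p_4)-c_\Phi(p_2,p_3,p_4)=\int_{\partial\Delta(p_1,p_2,p_3,p_4)}\Phi=\int_{\Delta(p_1,p_2,p_3,p_4)}d\Phi=0
\]
follows immediately, since $\Phi$ is closed (it is a Kähler form, hence $d\Phi=0$).

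The main obstacle I anticipate is purely a matter of care rather than of depth: one must make sure the coned simplices are compatible in the sense that the faces of $\Delta(p_1,p_2,p_3,p_4)$ really are the coned simplices on the three-element subsets, with the correct orientations, and that the limiting argument to the boundary is uniform enough for Stokes' theorem to apply to the (ideal, possibly degenerate) simplex. One clean way to sidestep the degeneracy worry is to first prove the identity for quadruples of points in $\Hnc$ — where $\Delta(p_1,p_2,p_3,p_4)$ is a genuine smooth (Lipschitz) singular simplex and Stokes applies directly — and then pass to the limit $p_i\to\xi_i\in\dHnc$ using continuity of $c_\Phi$, which is precisely the content of the sentence preceding Lemma \ref{c is cocycle}. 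This is why the authors write ``We clearly have'': modulo the standing facts about the van Est cocycle already recorded, the lemma is just Stokes' theorem for the closed form $\Phi$.
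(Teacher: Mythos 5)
The paper offers no proof of this lemma beyond ``We clearly have,'' and your argument --- Stokes' theorem for the closed form $\Phi$ on the geodesic coned $3$-simplex (whose faces are indeed the coned $2$-simplices on the triples, with the simplicial signs), proved first for quadruples in $\Hnc$ and then pushed to ideal vertices via the limiting continuity already invoked in the paragraph preceding the lemma --- is exactly the standard justification the paper has in mind, so your proposal is correct and matches the intended approach. The only point worth tightening is the claim that bounded comass of $\Phi$ by itself makes the ideal integrals converge: one also needs that coned triangles with ideal vertices have finite area (e.g.\ ruled surfaces in sectional curvature $\leq -1/4$ have Gauss curvature $\leq -1/4$, so Gauss--Bonnet bounds the area), or alternatively one can invoke $c_\Phi=2\A$ on the boundary.
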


The cocycle $c_\Phi$ is proportional to the Cartan angular invariant:
\begin{Th}{\cite[Theorem 7.1.11]{Gol}}
\label{c=2A}
$c_\Phi(\overline{p}) =2\mathbb{A}(\overline{p})$ for all $\overline{p}\in(\dHnc)^3$. 
\end{Th}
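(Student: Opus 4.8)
The plan is to identify $c_\Phi$ with the invariant cocycle $2\mathbb{A}$ by pairing an essentially soft uniqueness argument with one explicit normalising computation. First I would record the properties the two functions share on $(\dHnc)^3$: both are continuous; both are bounded (for $\mathbb{A}$ this is $|\mathbb{A}|\le\pi/2$, for $c_\Phi$ it is precisely the finiteness of the symplectic area of a geodesic ideal triangle, i.e.\ the fact that $c_\Phi$ defines a \emph{bounded} class); both are cocycles (Lemma~\ref{c is cocycle} for $c_\Phi$, the standard fact for $\mathbb{A}$); both are alternating; and both are invariant under $\mathrm{PU}(n,1)$ and change sign under anti-holomorphic isometries --- for $c_\Phi$ because $\Phi$ does and the geodesic coned simplex is carried to the geodesic coned simplex, for $\mathbb{A}$ by the Lemma in Section~\ref{Complex hyperbolic plane}.

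The clean finish: $f:=c_\Phi-2\mathbb{A}$ is a $\mathrm{PU}(n,1)$-invariant, bounded, alternating cocycle and so defines a class in $H^2_{c,b}(\mathrm{PU}(n,1);\mathbb{R})$. This group is one-dimensional (the comparison map to $H^2_c(\mathrm{PU}(n,1);\mathbb{R})\cong\mathbb{R}$ is injective and the Kähler class $[c_\Phi]$ is nonzero in it), so $c_\Phi=\lambda\mathbb{A}+\delta\beta$ for some $\lambda\in\mathbb{R}$ and some $\mathrm{PU}(n,1)$-invariant $1$-cochain $\beta$ on $\dHnc$. Since $\mathrm{PU}(n,1)$ acts transitively on ordered pairs of distinct boundary points, $\beta$ is constant there, hence $\delta\beta$ is constant on triples of distinct points; but $c_\Phi-\lambda\mathbb{A}$ is alternating, which forces that constant to be $0$, and on triples with a repeated vertex both sides vanish (degenerate simplex). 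Hence $c_\Phi=\lambda\mathbb{A}$ identically. To pin down $\lambda$ I would evaluate on a triple lying on a complex line $\Sigma\subset\Hnc$, the case $\mathbb{A}=\pm\pi/2$: $\Sigma$ is a totally geodesic complex curve, hence a hyperbolic plane of curvature $-1$ on which $\Phi$ restricts to the Riemannian area form, and $\Delta(p_1,p_2,p_3)$ is the ideal triangle in $\Sigma$, whose area is $\pi$ by Gauss--Bonnet; thus $c_\Phi=\pi=\lambda\cdot\tfrac{\pi}{2}$ and $\lambda=2$. As a consistency check, for $\mathbb{A}=0$ the triple lies on a totally real $\mathbb{H}^2_{\mathbb{R}}\subset\Hnc$, which is Lagrangian, so $\Phi$ vanishes on the coned simplex and $c_\Phi=0=2\mathbb{A}$.

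If one prefers to avoid the input on $H^2_{c,b}$, there is a direct alternative. Since $\Delta(p_1,p_2,p_3)$ lies in the totally geodesic subspace of $\Hnc$ spanned by $p_1,p_2,p_3$, which sits inside a copy of $\Hc$, it suffices to treat $n=2$; and since $\mathbb{A}$ is a complete $\mathrm{PU}(2,1)$-invariant of triples of distinct boundary points (\cite[Theorem 7.1.1]{Gol}) and both sides are alternating and anti-invariant under anti-holomorphic maps, it suffices to verify the identity on one representative triple for each $t=\mathbb{A}\in[0,\pi/2]$. One then takes an explicit model (Siegel domain, with Heisenberg coordinates on the boundary), parametrises the geodesic coned simplex over such a triple, and either integrates $\Phi$ over it directly or integrates an explicit Kähler-potential primitive of $\Phi$ over the three geodesic edges, obtaining $c_\Phi$ as a function of $t$ and checking it equals $2t$.

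The main obstacle differs by route. In the first approach everything is soft except the normalisation, which is routine; the only real care points are the orientation bookkeeping (checking that the conventions make $\mathbb{A}=+\pi/2$ correspond to $c_\Phi=+\pi$ and not $-\pi$) and confirming that the one-dimensionality of $H^2_{c,b}(\mathrm{PU}(n,1);\mathbb{R})$ is available in the present setting. In the second approach the genuine work is the explicit integral, and its subtlety is the improper behaviour of $\Phi$ (equivalently, of its primitive) near the three ideal vertices: the individual edge contributions diverge, and one must control the cancellations that make the total finite and equal to exactly $2t$.
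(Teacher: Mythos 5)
The paper itself does not prove this statement: it quotes it from Goldman \cite[Theorem 7.1.11]{Gol}, whose argument is a direct computation --- essentially your fallback route (reduce to a single representative triple for each value of $\A$, using that $\A$ classifies triples of distinct boundary points up to $\mathrm{PU}(2,1)$, and integrate an explicit primitive of $\Phi$, controlling the cancellation of the divergent edge contributions at the ideal vertices). Your primary route --- both sides are bounded, alternating, $\mathrm{PU}(n,1)$-invariant cocycles on $(\dHnc)^3$, the space of such cocycles is at most one-dimensional, and the constant is pinned down on a chain, where $\Delta(p_1,p_2,p_3)$ is an ideal triangle of area $\pi$ in a totally geodesic complex line of curvature $-1$ while $\A=\pm\pi/2$ --- is a genuinely different, softer argument, and it is essentially correct: it is the standard ``$H^2_{c,b}(\mathrm{PU}(n,1);\mathbb{R})$ is one-dimensional and spanned by the Cartan cocycle'' argument. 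What it buys is that the only integral ever computed is the normalisation; what it costs is heavier input that you assert rather than prove: (a) boundedness of $\int_\Delta\Phi$ over ideal geodesic triangles (a Domic--Toledo type fact; note it is also an immediate consequence of the theorem being proved, so it must be supplied independently), (b) the realization of $H^2_{c,b}$ on the boundary via amenability of the minimal parabolic (the degree-$2$ analogue of Lemma \ref{discrete}, cf.\ \cite{Mon1}), and (c) the Burger--Monod isomorphism $H^2_{c,b}(\mathrm{PU}(n,1);\mathbb{R})\cong H^2_c(\mathrm{PU}(n,1);\mathbb{R})\cong\mathbb{R}$, which is much deeper than anything in Goldman's computation.

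Two details of your ``clean finish'' need rephrasing rather than repair. In the boundary complex the primitive $\beta$ is only an $L^\infty$-class, defined and invariant almost everywhere, so ``$\beta$ is constant on distinct pairs by double transitivity, hence $\delta\beta$ is an alternating constant, hence zero'' holds only a.e.; the cleaner statement is that, by double ergodicity of the boundary action, invariant alternating $L^\infty$ cocycles on triples inject into $H^2_{c,b}$, so $c_\Phi$ and $\A$ are linearly dependent a.e.\ (this also removes the implicit need for $[\A]\neq0$ when you write $c_\Phi=\lambda\A+\delta\beta$), and then continuity of both sides on the full-measure open set of triples of pairwise distinct points upgrades the identity to a pointwise one there, with degenerate triples checked by hand. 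With these adjustments the argument is sound, and your normalisation $\lambda=2$ (area $\pi$ against $\A=\pi/2$, with the orientation check you flag) is the correct one for the paper's curvature convention, consistent with the bound $|c_\Phi|\le\pi$ used in the proof of Theorem \ref{kahler}.
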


\begin{Le}
Let $\omega\in H^4_{c}(\mathrm{PU}(2,1);\mathbb{R)}$ be the image of the volume form in $\Omega^4(\Hc)^{\mathrm{PU}(2,1)}$ under the van Est isomorphism. Then $\omega=\frac{1}{2}\cdot c_\Phi\cup c_\Phi$.
\end{Le}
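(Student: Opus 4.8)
The plan is to obtain the identity from two inputs: a pointwise Kähler-geometry fact identifying the Riemannian volume form of $\Hc$ with a multiple of $\Phi\wedge\Phi$, and the multiplicativity of the van Est isomorphism in Dupont's explicit cochain description recalled above.

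First I would record the linear-algebraic fact: since $\Hc$ is Kähler of complex dimension $n=2$ with Kähler form $\Phi$, its Riemannian volume form is $\Phi^{\wedge n}/n!=\tfrac12\,\Phi\wedge\Phi$. This is an identity of invariant $4$-forms on $\Hc$, and the constant $\tfrac12$ is simply $1/n!$ with $n=2$; in particular it is independent of the chosen normalisation of the metric (holomorphic sectional curvature $-1$) and does not interact with it. Thus the invariant $4$-form whose van Est image is $\omega$ equals $\mathrm{vol}=\tfrac12\,\Phi\wedge\Phi\in\Omega^4(\Hc)^{\mathrm{PU}(2,1)}$.

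Next I would invoke multiplicativity. The van Est isomorphism is a ring isomorphism, and Dupont's cochain-level map $\eta\mapsto c_\eta$ from invariant forms to ``geodesic coned'' cocycles is multiplicative up to homotopy: for invariant forms $\eta_1,\eta_2$ the cocycles $c_{\eta_1\wedge\eta_2}$ and $c_{\eta_1}\cup c_{\eta_2}$ are cohomologous in continuous cohomology. Taking $\eta_1=\eta_2=\Phi$ gives $[c_{\Phi\wedge\Phi}]=[c_\Phi]\cup[c_\Phi]$ in $H^4_c(\mathrm{PU}(2,1);\mathbb{R})$. Since $\eta\mapsto c_\eta$ is linear and, by the discussion preceding the Lemma, $[c_\Phi]$ (also for the boundary version obtained by letting $x\to\xi$) is the van Est image of $\Phi$, combining the two previous paragraphs yields
\[
\omega=[c_{\mathrm{vol}}]=\tfrac12\,[c_{\Phi\wedge\Phi}]=\tfrac12\,[c_\Phi]\cup[c_\Phi]=\tfrac12\,c_\Phi\cup c_\Phi,
\]
which is the claim.

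The step requiring the most care is the bookkeeping of constants and conventions rather than any genuine difficulty. One should check that the cup product of homogeneous cochains used here, $(\alpha\cup\alpha)(g_0,\dots,g_4)=\alpha(g_0,g_1,g_2)\,\alpha(g_2,g_3,g_4)$, is the normalisation under which Dupont's map is multiplicative, so that no spurious combinatorial factor is introduced; and that the limiting procedure defining $c_\Phi$ on boundary triples preserves the cohomology class, so that its self-cup-product genuinely represents the van Est image of $\Phi\wedge\Phi=2\,\mathrm{vol}$. Both points were essentially already set up in the preceding subsection, so beyond this verification the argument reduces to the displayed two-line computation; the substance of the Lemma is just the compatibility of van Est with products together with the elementary identity $\mathrm{vol}=\Phi^{\wedge 2}/2!$.
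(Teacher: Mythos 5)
Your proposal is correct and follows essentially the same two-step argument as the paper: the pointwise identity $\mathrm{vol}=\tfrac12\,\Phi\wedge\Phi$ (which the paper cites from Goldman rather than deriving from $\Phi^{\wedge n}/n!$) together with the multiplicativity/naturality of the van Est isomorphism with respect to products. Your additional remarks about cup-product normalisation and the boundary limit are careful but not a departure from the paper's reasoning.
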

\begin{proof}
 The volume form on $\Hc$ is equal to $\frac{1}{2}\Phi\wedge\Phi$ (see for example \cite[Chapter 3]{Gol}). Since the van Est isomorphism is natural with respect to products it sends $\frac{1}{2}\Phi\wedge\Phi$ to $\frac{1}{2}c_\Phi\cup c_\Phi$.
\end{proof}

\subsection{Simplicial volume and proportionality principles}
Let $M$ be an oriented closed manifold of dimension $n$. The $\ell^1$-norm $\|\cdot\|_1$ with respect to the basis of singular simplices on the space of real-valued chains $C_*(M;\mathbb{R})$ is given by
\[
\left\|\sum_{j=1}^k a_j\sigma_j\right\|_1=\sum_{j=0}^k |a_j|.
\]
The \textit{$\ell^1$-seminorm} of a homology class in $\alpha\in H_*(M;\mathbb{R})$ is then defined as the infimum of the $\ell^1$-norm of its representatives, i.e.
\[
\|\alpha\|_1:=\inf\left\{\sum_{j=0}^k |a_j| \middle| \alpha=\left[\sum_{j=1}^k a_j\sigma_j\right]\right\}.
\]
The \textit{simplicial volume} $\|M\|$ of $M$ is the $\ell^1$-seminorm of the the real valued fundamental class $[M]\in H_n(M;\mathbb{R})$:
\[
\|M\|:=\|[M]\|_1.
\]

Denote by $\la\beta,\alpha\ra$ the canonical pairing of a cohomology class $\beta\in H^p(M;\mathbb{R})$ with a homology class $\alpha\in H_p(M;\mathbb{R})$. Recall that the \textit{Gromov norm} $\|\beta\|_\infty$ is the semi-norm given by the infimum of the sup-norms of all cocycles representing $\beta$:
\[
\| \beta \|_\infty = \inf \{  \|b\|_\infty \mid [b]=\beta\} \in \mathbb{R}_{\geq 0} \cup \{+ \infty \}.
\]
Then we have:
\begin{Pro}{\cite[Proposition F.2.2]{BP}}
\label{pairing}
For any $\alpha\in H_p(M;\mathbb{R})$ and $\beta\in H^p(M;\mathbb{R})$
\[
|\la\beta,\alpha\ra|\leq\|\beta\|_\infty \cdot \|\alpha\|_1.
\]
Furthermore, if $\beta\in H^n(M;\mathbb{R})$ then
\[
|\la \beta,[M]\ra|=\|\beta\|_\infty\cdot \|M\|.
\]
\end{Pro}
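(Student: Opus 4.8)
The plan is to deduce everything from an elementary pairing estimate together with the Hahn--Banach theorem; no geometry beyond the fact that $M$ is a closed oriented manifold is needed. For the inequality $|\la\beta,\alpha\ra|\le\|\beta\|_\infty\|\alpha\|_1$ I would simply pick a singular cocycle $b$ representing $\beta$ and a singular cycle $z=\sum_j a_j\sigma_j$ representing $\alpha$; then $\la\beta,\alpha\ra=b(z)=\sum_j a_j b(\sigma_j)$, so by the triangle inequality $|\la\beta,\alpha\ra|\le\sum_j|a_j|\,|b(\sigma_j)|\le\|b\|_\infty\|z\|_1$, and taking the infimum first over cycles $z$ in the class $\alpha$ and then over cocycles $b$ in the class $\beta$ gives the claim. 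Specialising to $\beta\in H^n(M;\mathbb{R})$ and $\alpha=[M]$ yields $|\la\beta,[M]\ra|\le\|\beta\|_\infty\|M\|$, which is already one of the two inequalities making up the asserted equality.

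For the reverse inequality I would use that $M$ is closed, connected and oriented of dimension $n$, so $H_n(M;\mathbb{R})$ is one-dimensional and generated by $[M]$ (in particular $[M]\ne0$ and $\|M\|<\infty$), and that over the field $\mathbb{R}$ the universal coefficient theorem identifies $H^n(M;\mathbb{R})$ with $\mathrm{Hom}_{\mathbb{R}}(H_n(M;\mathbb{R}),\mathbb{R})$, so a degree-$n$ class is determined by its value on $[M]$ and the pairing $\la\,\cdot\,,\,\cdot\,\ra$ on $H^n\times H_n$ is non-degenerate. The identity is only interesting when $\|M\|>0$, which is automatic in all situations relevant to this paper, so I assume this. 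If $\la\beta,[M]\ra=0$ then $\beta=0$ and both sides vanish; hence I may also assume $\la\beta,[M]\ra\ne0$. The goal is then to exhibit a bounded cocycle $b$ with $[b]=\beta$ and $\|b\|_\infty\le|\la\beta,[M]\ra|/\|M\|$, for then $\|\beta\|_\infty\le\|b\|_\infty\le|\la\beta,[M]\ra|/\|M\|$ completes the proof.

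To construct $b$, view $C_n(M;\mathbb{R})$ as a normed space under $\|\cdot\|_1$, noting that a linear functional on it is precisely a singular $n$-cochain and that its operator norm equals its sup-norm. Fix a cycle $z_0$ representing $[M]$; since $z_0\notin B_n(M;\mathbb{R})$, the sum $W:=B_n(M;\mathbb{R})\oplus\mathbb{R}z_0$ is direct, and I define $b_0\colon W\to\mathbb{R}$ by $b_0|_{B_n(M;\mathbb{R})}=0$ and $b_0(z_0)=\la\beta,[M]\ra$. The key point is that any $w=\partial c+tz_0\in W$ is a cycle representing $t[M]$, so, since the $\ell^1$-seminorm on $H_n(M;\mathbb{R})$ is the quotient seminorm, $\|w\|_1\ge\|t[M]\|_1=|t|\,\|M\|$; hence $|b_0(w)|=|t|\,|\la\beta,[M]\ra|\le\big(|\la\beta,[M]\ra|/\|M\|\big)\|w\|_1$, i.e.\ $b_0$ has operator norm at most $|\la\beta,[M]\ra|/\|M\|$ on $W$. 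Extending $b_0$ by Hahn--Banach to a cochain $b\in C^n(M;\mathbb{R})$ with $\|b\|_\infty\le|\la\beta,[M]\ra|/\|M\|$: since $b$ vanishes on $B_n(M;\mathbb{R})$ it is a cocycle, and $\la[b],[M]\ra=b(z_0)=\la\beta,[M]\ra$ forces $[b]=\beta$ by non-degeneracy of the pairing, which is exactly what was needed.

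I expect the only non-formal step to be the Hahn--Banach extension, together with the identification of the continuous dual of $(C_n(M;\mathbb{R}),\|\cdot\|_1)$ with sup-normed cochains and of the $\ell^1$-seminorm on $H_n(M;\mathbb{R})$ with a quotient seminorm; the rest is bookkeeping. The point that needs a little care is separating off the degenerate case $\la\beta,[M]\ra=0$ at the outset, so that the division by $\|M\|$ in the main construction is legitimate.
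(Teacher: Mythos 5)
Your argument is correct and is essentially the standard proof of this duality: the paper offers no proof of its own, citing Benedetti--Petronio, and their Proposition F.2.2 is established by exactly this kind of Hahn--Banach argument (the dual of $(C_n(M;\mathbb{R}),\|\cdot\|_1)$ is the sup-normed cochains, and one extends a functional defined on $B_n(M;\mathbb{R})\oplus\mathbb{R}z_0$ with the right operator norm). Your standing assumptions that $M$ is connected and $\|M\|>0$ are the ones under which the stated equality makes literal sense, and they hold for the closed complex hyperbolic surfaces considered in this paper, so nothing is lost.
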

The simplicial volume and the volume  of $M$ are related by the Gromov-Thurston proportionality principal (see \cite{Gr,Th1}) which is given by
\[
\|M\|=\frac{\text{Vol}(M)}{c(\widetilde{M})},
\]
where $c(\widetilde{M})$ is a positive constant (possibly infinite) which only depends on the universal cover $\widetilde{M}$ of $M$. In fact, for locally symmetric spaces of noncompact type, Bucher obtains in \cite{Bu1} that the proportionality constant
\[
c(\widetilde{M})=\|\omega_{\widetilde{M}}\|_\infty,
\]
with $\omega_{\widetilde{M}}\in H_c^n(\text{Isom}_0(\widetilde{M});\mathbb{R})$ the volume form. So we have
\begin{Pro}
\label{proportionality}
Let $M$ be a locally symmetric space of noncompact type. Then 
\[
\|M\|=\frac{\mathrm{Vol}(M)}{\|\omega_{\widetilde{M}}\|_\infty},
\]
with $\omega_{\widetilde{M}}\in H_c^n(\text{Isom}_0(\widetilde{M});\mathbb{R})$ the volume form.
\end{Pro}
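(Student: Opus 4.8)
\smallskip
The plan is to combine Proposition \ref{pairing} with Gromov's proportionality principle, the remaining content being the identification of the proportionality constant, which is \cite{Bu1}; I sketch how the pieces fit. Write $G=\mathrm{Isom}_0(\widetilde M)$, let $K<G$ be a maximal compact subgroup so that $\widetilde M=G/K$, and let $\Gamma=\pi_1(M)<G$ be the corresponding cocompact lattice, so that $M=\Gamma\backslash\widetilde M$. The Riemannian volume form of $M$ represents a de Rham class $\omega_M\in H^n(M;\mathbb{R})$ with $\la\omega_M,[M]\ra=\mathrm{Vol}(M)>0$, so the second part of Proposition \ref{pairing} gives
\[
\mathrm{Vol}(M)=|\la\omega_M,[M]\ra|=\|\omega_M\|_\infty\cdot\|M\| .
\]
Hence the proposition is equivalent to the equality of seminorms $\|\omega_M\|_\infty=\|\omega_{\widetilde M}\|_\infty$, and it is this that I would prove.

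For the inequality $\|\omega_M\|_\infty\le\|\omega_{\widetilde M}\|_\infty$ I would use the canonical map $\Psi\colon H^n_c(G;\mathbb{R})\to H^n(\Gamma;\mathbb{R})\cong H^n(M;\mathbb{R})$ obtained by restricting continuous cochains on $G$ to $\Gamma$ (the last isomorphism is the canonical one for the aspherical manifold $M=K(\Gamma,1)$, and it is isometric for the respective $\ell^\infty$-seminorms). Restricting a cochain never increases its sup-norm, so $\Psi$ is norm non-increasing; and Dupont's description of the van Est isomorphism shows that $\Psi(\omega_{\widetilde M})=\omega_M$, since straightening a fundamental cycle of $M$ to a sum of geodesic simplices produces a cycle on which the restricted van Est cocycle $c_{\omega_{\widetilde M}}(\gamma_0,\dots,\gamma_n)=\int_{\Delta(\gamma_0 x,\dots,\gamma_n x)}\omega_{\widetilde M}$ integrates to $\mathrm{Vol}(M)$. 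Thus $\|\omega_M\|_\infty=\|\Psi(\omega_{\widetilde M})\|_\infty\le\|\omega_{\widetilde M}\|_\infty$.

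The reverse inequality $\|\omega_{\widetilde M}\|_\infty\le\|\omega_M\|_\infty$ is the heart of the matter and is where the hypothesis that $M$ is a \emph{closed} locally symmetric space of noncompact type is genuinely used. The key tool is the transfer map: since $\Gamma<G$ is a lattice, $\Gamma\backslash G$ carries a finite $G$-invariant measure, and integration over $\Gamma\backslash G$ defines a norm non-increasing transfer $\mathrm{trans}\colon H^n_b(\Gamma;\mathbb{R})\to H^n_{c,b}(G;\mathbb{R})$ that is a left inverse of the restriction map, so that restriction is an isometric embedding on bounded cohomology. One checks that the volume cocycle $c_{\omega_{\widetilde M}}$ is bounded — for $\widetilde M=\Hc$ this is the finiteness of $\|c_\Phi\cup c_\Phi\|_\infty$, as $\omega_{\widetilde M}$ is represented by $\tfrac12\,c_\Phi\cup c_\Phi$ — so that $\omega_{\widetilde M}$ admits a bounded lift $\omega^b_{\widetilde M}\in H^n_{c,b}(G;\mathbb{R})$ restricting to a bounded lift of $\omega_M$; applying the transfer and using that the ordinary $\ell^\infty$-seminorm of a class is the infimum of the seminorms of its bounded lifts then yields $\|\omega_{\widetilde M}\|_\infty\le\|\omega_M\|_\infty$.

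The main obstacle is precisely this last step: assembling the transfer map with the comparison between bounded and ordinary seminorms so that no seminorm is lost in passing from the group-cohomology level down to $M$. This requires both the finite covolume of $\Gamma$ (for the transfer to exist and be norm non-increasing) and the boundedness of the volume cocycle (so that the volume class actually lives in bounded cohomology), and making it rigorous is exactly the content of Bucher's theorem in \cite{Bu1}; in our situation the finiteness of the volume cocycle will in any case be re-established in Section 4 as part of the proof of Theorem \ref{kahler}.
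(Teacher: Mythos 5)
Your proposal is correct and takes essentially the same route as the paper: the paper gives no independent proof either, but quotes the Gromov--Thurston proportionality principle together with Bucher's identification of the constant in \cite{Bu1}, which is precisely the statement you reduce to (via Proposition \ref{pairing}, i.e.\ the isometry $\|\omega_M\|_\infty=\|\omega_{\widetilde M}\|_\infty$) and whose restriction/transfer proof you sketch. The only loosely worded point is the final step --- one should transfer an (almost optimal) bounded lift of $\omega_M$ and use $\mathrm{trans}\circ\mathrm{res}=\mathrm{id}$ on $H^n_c(G;\mathbb{R})$ to see that its image is a bounded lift of $\omega_{\widetilde M}$, rather than starting from a bounded lift of $\omega_{\widetilde M}$ --- but since you defer the rigorous argument to \cite{Bu1}, exactly as the paper does, this is harmless.
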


The volume of $M$ is also proportional to $\chi(M)$. Indeed, by Hirzebruch's proportionality principle,
\begin{Pro}{\cite{Hi}}
\label{Hirzebruch}
Let $N$ be a closed, oriented, locally symmetric space of noncompact type and let $\mathcal{X}_u$ be the compact dual of its universal cover $\mathcal{X}=\widetilde{N}$. Then
\[
\frac{\mathrm{Vol}(N)}{\chi(N)}=\frac{\mathrm{Vol}(\mathcal{X}_u)}{\chi(\mathcal{X}_u)}.
\] 
\end{Pro}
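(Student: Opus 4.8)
The plan is to run the standard Chern--Weil argument, whose only non-formal input is that a symmetric space carries essentially a unique invariant top-degree form. Write $\mathcal{X}=\widetilde{N}=G/K$ with $G=\mathrm{Isom}_0(\mathcal{X})$, and assume $n=\dim_{\mathbb{R}}\mathcal{X}=2m$ is even (for $n$ odd both Euler characteristics vanish and there is nothing to prove). First I would note that the Euler form $e(\mathcal{X})\in\Omega^n(\mathcal{X})^G$ produced by the Gauss--Bonnet--Chern construction (the Pfaffian of the Riemannian curvature $2$-form) and the Riemannian volume form $\mathrm{dvol}_{\mathcal{X}}\in\Omega^n(\mathcal{X})^G$ are both $G$-invariant. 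Since $\Omega^n(\mathcal{X})^G\cong(\Lambda^n\mathfrak{p}^*)^K$ and $G$ acts on the oriented manifold $\mathcal{X}$ preserving orientation, this space is one-dimensional; hence there is a constant $c(\mathcal{X})\in\mathbb{R}$ with $e(\mathcal{X})=c(\mathcal{X})\,\mathrm{dvol}_{\mathcal{X}}$. Being a pointwise identity between curvature-theoretic quantities, it descends to every quotient, so on $N=\Gamma\backslash\mathcal{X}$ we have $e(N)=c(\mathcal{X})\,\mathrm{dvol}_N$. Integrating and applying the Gauss--Bonnet--Chern theorem,
\[
\chi(N)=\int_N e(N)=c(\mathcal{X})\,\mathrm{Vol}(N),
\]
and the same reasoning on the compact dual gives $\chi(\mathcal{X}_u)=c(\mathcal{X}_u)\,\mathrm{Vol}(\mathcal{X}_u)$.

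It remains to compare $c(\mathcal{X})$ with $c(\mathcal{X}_u)$, and here the compact dual enters. With respect to the Cartan decompositions $\mathfrak{g}=\mathfrak{k}\oplus\mathfrak{p}$ and $\mathfrak{g}_u=\mathfrak{k}\oplus i\mathfrak{p}$, the linear isomorphism $X\mapsto iX$ identifies $\mathfrak{p}$ with the tangent space of $\mathcal{X}_u$ at the base point, carries the normalised invariant metric of $\mathcal{X}$ to that of $\mathcal{X}_u$ (hence matches volume forms and orientations), but replaces the curvature operator $R(X,Y)Z=-[[X,Y],Z]$ of $\mathcal{X}$ by its negative $R_u(X,Y)Z=+[[X,Y],Z]$. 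As the Pfaffian defining the Euler form is homogeneous of degree $m$ in the curvature $2$-form, this sign reversal gives $e(\mathcal{X}_u)=(-1)^m e(\mathcal{X})$ under the identification, hence $c(\mathcal{X}_u)=(-1)^m c(\mathcal{X})$. Combining with the displayed formulas (and noting $c(\mathcal{X})\neq 0$, equivalently $\chi(\mathcal{X}_u)\neq 0$, which holds in all cases we use --- for $\Hc$ the dual is $\mathbb{C}P^2$ with $\chi=3$):
\[
\frac{\mathrm{Vol}(N)}{\chi(N)}=\frac{1}{c(\mathcal{X})}=(-1)^m\,\frac{1}{c(\mathcal{X}_u)}=(-1)^m\,\frac{\mathrm{Vol}(\mathcal{X}_u)}{\chi(\mathcal{X}_u)}.
\]
For $\mathcal{X}=\Hc$ one has $n=4$ and $m=2$, so the sign is $+1$ and we obtain exactly the stated equality (in general the identity holds with the factor $(-1)^m$; e.g.\ for $\mathcal{X}=\mathbb{H}^2_{\mathbb{R}}$, $m=1$, the two ratios are genuinely opposite).

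The main obstacle is the sign-and-normalisation bookkeeping in the second step: one must check that the canonical isomorphism $\mathfrak{p}\cong i\mathfrak{p}$ simultaneously respects orientations and the metrics in the scaling that pins down the holomorphic curvature, while flipping the curvature operator, so that every sign in the final ratio is accounted for. The rest is the routine Chern--Weil/Gauss--Bonnet package built on the single structural fact that an invariant top form on a symmetric space is unique up to scale.
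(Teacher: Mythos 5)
Your argument is correct in substance, and it is worth noting that the paper does not prove this proposition at all: it is quoted from Hirzebruch \cite{Hi}, whose original argument proportionality of Chern numbers for quotients of bounded symmetric domains goes through essentially the same mechanism you use, namely that all invariant characteristic forms on $\mathcal{X}$ and on its compact dual are determined by the same Lie-algebra data up to the sign flip of the curvature. Your Chern--Weil/Gauss--Bonnet version is the standard direct proof for the Euler characteristic: uniqueness of the invariant top form gives $e(\mathcal{X})=c(\mathcal{X})\,\mathrm{dvol}$, Gauss--Bonnet--Chern gives $\chi(N)=c(\mathcal{X})\mathrm{Vol}(N)$ and $\chi(\mathcal{X}_u)=c(\mathcal{X}_u)\mathrm{Vol}(\mathcal{X}_u)$, and the duality $\mathfrak{p}\mapsto i\mathfrak{p}$ (an isometry for the dual normalisations, reversing the curvature operator) gives $c(\mathcal{X}_u)=(-1)^m c(\mathcal{X})$. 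The one point you handle more carefully than the paper is precisely this sign: as you observe, the identity as displayed in the Proposition holds literally only when $m=\dim_{\mathbb{R}}\mathcal{X}/2$ is even (equivalently, with the factor $(-1)^m$, or with absolute values, in general --- your $\mathbb{H}^2_{\mathbb{R}}$ example shows the two ratios have opposite signs for $m$ odd, since $\chi(N)$ has sign $(-1)^m$ while $\chi(\mathcal{X}_u)>0$). This does not affect the paper, which applies the statement only to $\Hc$, where $m=2$; but flagging it is a genuine improvement on the statement as quoted. Two small points to make explicit if you write this up: the compact dual must carry the metric dual to the chosen normalisation on $\mathcal{X}$ (here: holomorphic curvature $+1$ on $\mathbb{C}P^2$ dual to $-1$ on $\Hc$), and the case $\chi(\mathcal{X}_u)=0$ forces $c(\mathcal{X})=0$ and hence $\chi(N)=0$, so both ratios are undefined and the statement is vacuous there.
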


\section{Results}
\label{Results}

Let $M$ be a closed manifold that is locally isometric to the complex hyperbolic plane $\mathbb{H}_{\mathbb{C}}^2$.

Combining Proposition \ref{proportionality} with Theorem \ref{kahler} we obtain
\begin{namedthm*}{Theorem \ref{simplicialvolume}}
Let $M$ be a closed oriented manifold which is locally isometric to $\Hc$. Then
\[
\frac{2}{\pi^2}\mathrm{Vol}(M)\leq \|M\|\leq \frac{9}{4\pi^2}\mathrm{Vol}(M).
\]
\end{namedthm*}
We can also express this in terms of $\chi(M)$. Using Hirzeburch's proportionality principle, i.e. Proposition \ref{Hirzebruch}, we get 

\begin{Le}
\[
\mathrm{Vol}(M)=\frac{8}{3}\pi^2\cdot \chi(M).
\]
\end{Le}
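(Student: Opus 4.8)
The plan is to combine Hirzebruch's proportionality principle (Proposition \ref{Hirzebruch}) with the known volume of the compact dual of $\Hc$. The compact dual of complex hyperbolic $2$-space is the complex projective plane $\mathbb{C}P^2$ equipped with the Fubini--Study metric normalized so that the holomorphic sectional curvature equals $+1$ (matching our normalization of $\Hc$, where the holomorphic sectional curvature is $-1$). So the first step is to compute $\mathrm{Vol}(\mathbb{C}P^2)$ and $\chi(\mathbb{C}P^2)$ in this normalization.

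The Euler characteristic is immediate: $\chi(\mathbb{C}P^2) = 3$. For the volume, I would use the fact that, with our curvature normalization, the K\"ahler form is $\Phi$ and the volume form on a complex surface is $\frac{1}{2}\Phi\wedge\Phi$ (as already invoked in the Lemma preceding this one). Thus $\mathrm{Vol}(\mathbb{C}P^2) = \frac{1}{2}\int_{\mathbb{C}P^2}\Phi\wedge\Phi = \frac{1}{2}\left(\int_{\mathbb{C}P^1}\Phi\right)^2$, using that $[\Phi]$ generates $H^2(\mathbb{C}P^2;\mathbb{R})$ suitably and $\int_{\mathbb{C}P^1}\Phi$ is the area of a complex projective line (a totally geodesic $\mathbb{C}P^1$ of constant curvature). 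With holomorphic sectional curvature $+1$, a complex line is a round $2$-sphere of curvature $+1$, hence of area $4\pi$. Actually, I should be careful with the Fubini--Study normalization: the induced metric on a complex line $\mathbb{C}P^1 \subset \mathbb{C}P^2$ has constant Gaussian curvature equal to the holomorphic sectional curvature, so curvature $+1$ gives area $4\pi$. Therefore $\mathrm{Vol}(\mathbb{C}P^2) = \frac{1}{2}(4\pi)^2 = 8\pi^2$.

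Plugging into Proposition \ref{Hirzebruch} with $N = M$ and $\mathcal{X}_u = \mathbb{C}P^2$ gives
\[
\frac{\mathrm{Vol}(M)}{\chi(M)} = \frac{\mathrm{Vol}(\mathbb{C}P^2)}{\chi(\mathbb{C}P^2)} = \frac{8\pi^2}{3},
\]
which is exactly the claimed identity $\mathrm{Vol}(M) = \frac{8}{3}\pi^2\cdot\chi(M)$.

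The only genuine subtlety — and the step I would double-check most carefully — is matching the curvature normalizations on the two sides of Hirzebruch's principle and correctly computing the Fubini--Study volume of $\mathbb{C}P^2$ in that normalization; a factor-of-$2$ or factor-of-$4$ slip in the area of a complex line would propagate. Everything else is bookkeeping: once $\mathrm{Vol}(\mathbb{C}P^2) = 8\pi^2$ and $\chi(\mathbb{C}P^2)=3$ are in hand, the result is immediate from Proposition \ref{Hirzebruch}. As a sanity check, one can note that combining this Lemma with Theorem \ref{simplicialvolume} reproduces the bounds $\frac{16}{3}\chi(M)\leq\|M\|\leq 6\chi(M)$ (and with the introduction's constants, $\leq 24\chi(M)$), consistent with Corollary \ref{ineq_chi}.
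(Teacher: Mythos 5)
Your proof is correct and follows the same overall structure as the paper: apply Hirzebruch's proportionality principle with compact dual $\mathbb{C}P^2$, take $\chi(\mathbb{C}P^2)=3$, and show $\mathrm{Vol}(\mathbb{C}P^2)=8\pi^2$ in the normalization with holomorphic sectional curvature $+1$. The only real difference lies in how that volume is obtained. The paper computes it via the symplectic quotient description $\mathbb{C}P^2=S^5(r)/S^1(r)$, giving $\mathrm{Vol}(S^5(r))/\mathrm{Vol}(S^1(r))=\tfrac{1}{2}\pi^2 r^4$ and then setting $r=2$ to match the curvature normalization. You instead argue cohomologically: the volume form is $\tfrac{1}{2}\Phi\wedge\Phi$, so $\mathrm{Vol}(\mathbb{C}P^2)=\tfrac{1}{2}\bigl(\int_{\mathbb{C}P^1}\Phi\bigr)^2$, and a complex line is a totally geodesic round sphere whose Gaussian curvature equals the holomorphic sectional curvature, hence has area $4\pi$; this yields $\tfrac{1}{2}(4\pi)^2=8\pi^2$. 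Both computations are valid and the curvature bookkeeping you flag is handled correctly in each; your route has the small advantage of reusing the identity $\mathrm{vol}=\tfrac{1}{2}\Phi\wedge\Phi$ already invoked in the preceding lemma, while the paper's quotient computation avoids any appeal to the cohomology ring of $\mathbb{C}P^2$ or Wirtinger-type restriction. One side remark on your sanity check: the two upper bounds you mention ($6\chi(M)$ versus $24\chi(M)$) reflect the paper's own discrepancy between the constant $\tfrac{9}{4\pi^2}$ in the Section 3 restatement of Theorem \ref{simplicialvolume} and the constant $\tfrac{9}{\pi^2}$ in the introduction (the latter being the one consistent with Corollary \ref{ineq_chi} and with $\|\omega\|_\infty=\tfrac{1}{2}\|[c_\Phi\cup c_\Phi]\|_\infty\geq\tfrac{\pi^2}{9}$); this does not affect the lemma itself.
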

\begin{proof}
The compact dual of $\Hc$ is the complex projective plane $\mathbb{C}P^2$. Therefore, by Proposition \ref{Hirzebruch},
\[
\frac{\mathrm{Vol}(M)}{\chi(M)}=\frac{\mathrm{Vol}(\mathbb{C}P^2)}{\chi(\mathbb{C}P^2)}. 
\]
As $\mathbb{C}P^2$ has zero homology groups in odd dimensions and one-dimensional homology groups in even dimensions $\chi(\mathbb{C}P^2)=3$. Furthermore, the complex projective plane $\mathbb{C}P^2$ is a symplectic quotient $S^{5}(r)/S^1(r)$, with $S^n(r)$ the real $n$-sphere of radius $r$. Hence the volume of $\mathbb{C}P^2$ is
\[
\mathrm{Vol}(\mathbb{C}P^2)=\mathrm{Vol}(S^5(r))/\mathrm{Vol}(S^1(r))=\pi^3r^5/2\pi r=\frac{1}{2}\pi^2r^4,
\]
To have holomorphic sectional curvature equal to $1$ and therefore sectional curvature between $1/4$ and $1$ we have to set $r=2$ and we therefore obtain $\mathrm{Vol}(\mathbb{C}P^2)=8\pi^2$. For a more elaborate description of the Fubini-Study metric on the complex projective space and its sectional curvature see e.g. the  first pages of Chapter 6 in \cite{Sak}.
\end{proof}

We get the following lower and upper bound for the simplicial volume of $M$ in terms of its Euler characteristic:
\begin{namedthm*}{Corollary \ref{ineq_chi}}
Let $M$ be a closed oriented manifold which is locally isometric to $\Hc$. Then
\[
\frac{16}{3}\chi(M)\leq \|M\|\leq 24\chi(M). 
\]
\end{namedthm*}

Recall that a $\mathrm{GL}^+(4,\mathbb{R})$-bundle over $M$ is called \textit{flat} if it admits a flat structure, i.e. a connection on $\xi$ with zero curvature. Let $\xi$ be any flat $\mathrm{GL}^+(4,\mathbb{R})$-bundle over $M$. Its Euler number $\chi(\xi)$ is by definition the pairing of the Euler class $\varepsilon_4(\xi)\in H^4(M;\mathbb{R})$ with the fundamental class $[M]\in H_4(M;\mathbb{R})$:
\[
\chi(\xi):=<\varepsilon_4(\xi),[M]>.
\]
By Proposition \ref{pairing}
\[
|\chi(\xi)|=\|\varepsilon_4(\xi)\|_\infty\cdot\| M\|.
\]
Furthermore, Ivanov and Turaev \cite{IT} showed that the sup norm of the Euler class $\varepsilon_4(\xi)$ satisfies
\[
\|\varepsilon_n(\xi)\|_\infty\leq \frac{1}{2^n}.
\]

Thus from Corollary \ref{ineq_chi} we obtain the Milnor-Wood inequality 

\begin{namedthm*}{Corollary \ref{MW}}
Let $\xi$ be a flat $\mathrm{GL}^+(4,\mathbb{R})$-bundle over a closed complex hyperbolic surface $M$. Then
\[
|\chi(\xi)|\leq \frac{3}{2}\chi(M).
\]
\end{namedthm*}

\section{Proof of Theorem \ref{kahler}}

By Proposition \ref{proportionality}
\[
\|M\|= \frac{\text{Vol}(M)}{\|\omega\|_\infty},
\]
where $\omega=\frac{1}{2}c_\phi\cup c_\phi\in H_c^4(\mathrm{PU}(2,1);\mathbb{R})$ is the image under the van Est isomorphism of the volume form.

\begin{proof}[Proof of Theorem \ref{kahler}]
For all triples $(x_0,x_1,x_2)\in(\dHc)^3$ we have 
\[
|c_\Phi(x_0,x_1,x_2)|\leq \pi,
\]
and we therefore obtain the trivial upper bound $\pi^2$ for $\| [c_\Phi\cup c_\Phi]\|_\infty$. In Proposition \ref{lowerbound} below we obtain the lower bound $\frac{2}{9}\pi^2$. 
\end{proof}

Let $X$ be a topological space. The alternation of a $p$-cochain $f:X^{p+1}\to\mathbb{R}$ is given by 
\[
\text{Alt}(f)(x_0,\dots,x_p)=\frac{1}{(p+1)!}\sum_{\sigma\in \text{Sym}(p+1)}\text{sign}(\sigma)f(x_{\sigma(0)},\dots,x_{\sigma(p)}).
\]
Recall that for a $p$-cochain $f:X^{p+1}\to\mathbb{R}$ and a $q$-cochain $g:X^{q+1}\to\mathbb{R}$ the standard cup product $f\cup g$ is the $p+q$-cochain defined by 
\[
f\cup g(x_0,\dots, x_{p+q})=f(x_0,\dots,x_p)g(x_p,\dots,x_{p+q}).
\]
Slightly abusing notation, we will still denote by $c_\Phi\cup c_\Phi$ the alternation of the standard cup product of $c_\Phi$ with itself. Thus $c_\Phi\cup c_\Phi(x_0,\dots,x_4)$ is equal to
\begin{align*}
\frac{1}{120}\sum_{\sigma\in\Sym}\mathrm{sign}(\sigma)c_\Phi(x_{\sigma(0)},x_{\sigma(1)},x_{\sigma(2)})\cdot c_\Phi(x_{\sigma(2)},x_{\sigma(3)},x_{\sigma(4)}).
\end{align*}
Any $\sigma\in\mathrm{Sym}(5)$ can be uniquely written as $\tau^k\circ\alpha$, where $\alpha\in\mathrm{Sym}(5)$ maps $2$ to $0$, $\tau=(0\,1\,2\,3\,4)$ and $k$ is an integer from $0$ to $4$. Then, exploiting the fact that $c_\Phi$ itself is already alternating, we get
\begin{align*}
c_\Phi\cup c_\Phi&(x_0,\dots,x_4)\\
&=\frac{1}{15}\sum_{\substack{\tau=(0\,1\dots4)^k\\ k\in\{0,1,\dots,4\}}} c_\Phi(x_{\tau(0)},x_{\tau(1)},x_{\tau(2)})\cdot c_\Phi(x_{\tau(0)},x_{\tau(3)},x_{\tau(4)}) \\
&-c_\Phi(x_{\tau(0)},x_{\tau(1)},x_{\tau(3)})\cdot c_\Phi(x_{\tau(0)},x_{\tau(2)},x_{\tau(4)})\\
&+c_\Phi(x_{\tau(0)},x_{\tau(1)},x_{\tau(4)})\cdot c_\Phi(x_{\tau(0)},x_{\tau(2)},x_{\tau(3)}).
\end{align*}
We use the cocycle relation 
\begin{align*}
0&=\delta c_\Phi(x_0,x_i,x_j,x_k)\\
&=c_\Phi(x_i,x_j,x_k)-c_\Phi(x_0,x_j,x_k)\\
&+ c_\Phi(x_0,x_i,x_k)-c_\Phi(x_0,x_i,x_j),
\end{align*}
to rewrite the above sum such that $c_\Phi$ is always evaluated at $x_0$ and two other points. It then turns out that the five terms in the above sum are all the same and we therefore obtain that $c_\Phi\cup c_\Phi(x_0,\dots,x_4)$ is equal to 
\begin{align}
\label{alternatingcup}
\frac{1}{3}\big[&c_\Phi(x_0,x_1,x_2)\cdot c_\Phi(x_0,x_3,x_4)-c_\Phi(x_0,x_1,x_3)\cdot c_\Phi(x_0,x_2,x_4) \nonumber \\
&+c_\Phi(x_0,x_1,x_4)\cdot c_\Phi(x_0,x_2,x_3)\big].
\end{align}
\begin{Rem}
The natural map $Alt^*:H_c^p(X;\mathbb{R})\to H_c^p(X;\mathbb{R})$ which sends a cocycle to its alternation is an isomorphism with inverse induced by the identity map. Since both maps do not increase norms at the cochain level, we have $\|[Alt(f)]\|_\infty=\|[f]\|_\infty$ for any $[f]\in H_c^p(X;\mathbb{R})$. 
\end{Rem}

\subsection{Lower bound}
Our strategy for finding a lower bound is to find a set of $5$-tuples $\overline{\mathbf{p}_i}\in (\dHc)^5$ such that for all $\mathrm{PU}(2,1)$-invariant alternating cochains $b:(\dHc)^4\to\mathbb{R}$ we have
\[
\sum \delta b(\overline{\mathbf{p}_i})=0.
\]
If $b\in L^\infty((\dHc)^4;\mathbb{R})$ then $b$ is only defined a.e. and thus a pointwise equality as above would have no meaning. However, from Lemma \ref{discrete} and Lemma \ref{normlattice} it will follow that we can instead consider $c_\Phi\cup c_\Phi:(\dHc)^5\to\mathbb{R}$ as a cocycle in $H_b^4(\mathrm{PU}(2,1)^\delta;\mathbb{R})$, with $\mathrm{PU}(2,1)^\delta$ the underlying discrete group of the topological group $\mathrm{PU}(2,1)$. We will denote this everywhere defined cocycle by $c_\Phi^\delta\cup c_\Phi^\delta$. 

\begin{Le}
\label{discrete}
The bounded cohomology group $H_b^*(\mathrm{PU}(2,1)^\delta;\mathbb{R})$ is realized on the boundary, i.e. by the resolution 
\[
0\to \ell^\infty(\dHc;\mathbb{R})^G\to \ell^\infty((\dHc)^2;\mathbb{R})^G\to \ell^\infty((\dHc)^3;\mathbb{R})^G\to\dots
\]
\end{Le}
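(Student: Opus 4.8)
The plan is to invoke the general machinery for computing bounded cohomology of a locally compact group via a boundary action, specialized to $G = \mathrm{PU}(2,1)^\delta$ acting on $\dHc$. The key point is that $\dHc$, equipped with the visual measure class coming from the Patterson–Sullivan/Lebesgue measure, is an \emph{amenable} $G$-space in the sense of Zimmer, and moreover this remains true when $G$ is given the discrete topology, because amenability of the action only depends on the existence of a $G$-equivariant conditional expectation and the action of $\mathrm{PU}(2,1)$ on $\dHc$ is a \emph{doubly ergodic} amenable action whose amenability passes to any subgroup and in particular to $G^\delta$ (the point-stabilizers are minimal parabolics, which are amenable). First I would recall Burger–Monod's theorem: if $S$ is a regular amenable $G$-space, then $H_b^*(G;\mathbb{R})$ is computed by the complex of essentially bounded, weak-$*$ measurable, $G$-invariant functions $L^\infty_{w*}(S^{*+1};\mathbb{R})^G$ with the homogeneous coboundary. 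However, since here we work with the discrete group $G^\delta$, ordinary bounded cohomology of a discrete group is already computed by \emph{any} amenable action, and one may even replace $L^\infty$ by genuine everywhere-defined bounded functions $\ell^\infty$ once we are careful about the measure-theoretic subtleties; this is exactly what the statement of the lemma asserts with the notation $\ell^\infty((\dHc)^{k};\mathbb{R})^G$.

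The steps, in order, would be: (1) verify that the $\mathrm{PU}(2,1)$-action on $(\dHc,\text{Leb})$ is amenable in Zimmer's sense — this is classical, since $\dHc \cong \mathrm{PU}(2,1)/P$ with $P$ a minimal parabolic subgroup, and $P$ is amenable, so $\mathrm{PU}(2,1)/P$ is an amenable $\mathrm{PU}(2,1)$-space; (2) observe that amenability of the action is inherited by the subgroup $\mathrm{PU}(2,1)^\delta \le \mathrm{PU}(2,1)$ (restriction of an amenable action to a subgroup is amenable), so $\dHc$ is an amenable $G^\delta$-space; (3) apply the Burger–Monod resolution theorem (see \cite{Mon1}, or Burger–Monod, "Continuous bounded cohomology and applications to rigidity theory") to conclude that $H_b^*(G^\delta;\mathbb{R})$ is isometrically computed by the complex $L^\infty_{w*}((\dHc)^{*+1};\mathbb{R})^{G^\delta}$; (4) finally note that for the discrete group $G^\delta$ one can pass between the $L^\infty$ resolution and the $\ell^\infty$ resolution of everywhere-defined invariant bounded functions without changing the cohomology or the seminorm, since a measurable $G^\delta$-invariant class has an everywhere-defined invariant representative (this uses that the orbit equivalence relation is, for the purposes of the cocycle identities we need in the sequel, handled on a conull set, and the extension is harmless at the level of cohomology — this is the content we will actually use via Lemma \ref{normlattice}).

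The main obstacle — or rather, the point requiring the most care — is step (4): the passage from the measure-theoretic resolution $L^\infty_{w*}$ to the everywhere-defined resolution $\ell^\infty$. The Burger–Monod theorem natively produces the $L^\infty$ complex, where cochains are only defined almost everywhere, whereas the lower-bound argument in the next subsection needs to evaluate $c_\Phi^\delta \cup c_\Phi^\delta$ and $\delta b$ at \emph{specific} tuples $\overline{\mathbf{p}_i} \in (\dHc)^5$. Reconciling these is precisely why the next two lemmas (Lemma \ref{normlattice} and the surrounding discussion) are stated: one shows that $c_\Phi$, being the everywhere-defined cocycle it is by construction, together with its cup-square, already lives in the $\ell^\infty$ complex and represents the correct class with the correct norm, so that the $\ell^\infty$ resolution computes $H_b^*(G^\delta;\mathbb{R})$ \emph{with the right seminorm}. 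I would therefore present step (4) as: cite that the canonical map from the $\ell^\infty$ complex to the $L^\infty_{w*}$ complex is a norm-nonincreasing quasi-isomorphism (both compute $H_b^*$ because both are relatively injective strong resolutions — the $\ell^\infty$ one because point-stabilizers are amenable groups, which makes $\ell^\infty((\dHc)^{k+1})$ a relatively injective $G^\delta$-module, and the resolution is strong via the standard contracting homotopy using a point of $\dHc$), hence induces an isometric isomorphism on cohomology.
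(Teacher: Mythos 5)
Your proposal contains the correct argument only as an aside: the parenthetical in your step (4) --- relative injectivity of $\ell^\infty((\dHc)^{k+1};\mathbb{R})$ as a $\mathrm{PU}(2,1)^\delta$-module because the point stabilizers are amenable groups, together with the strong resolution given by coning at a basepoint of $\dHc$ --- is essentially the whole proof, and is exactly what \cite[Theorem 7.5.3]{Mon1} (the result the paper invokes) provides. By contrast, the main route you propose, steps (1)--(3), is not justified in this setting. The Burger--Monod/Zimmer machinery of amenable actions and $L^\infty_{w*}$-resolutions is developed for locally compact second countable groups acting on regular measure spaces; $\mathrm{PU}(2,1)^\delta$ is an uncountable discrete group, hence not second countable, and it is not a closed subgroup of $\mathrm{PU}(2,1)$ --- it is the same abstract group with a finer topology --- so ``restriction of an amenable action to any subgroup'' is not available here (the standard restriction statements concern closed subgroups, e.g.\ lattices). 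Likewise, your claim that a measurable $G^\delta$-invariant class admits an everywhere-defined invariant representative is precisely the kind of statement one cannot take for granted; note that the paper never passes from $L^\infty$-classes to $\ell^\infty$-representatives at all, but instead compares the two resolutions only after restricting to a cocompact lattice (Lemma \ref{normlattice}).

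The more substantive gap is that you never verify the one fact on which the lemma actually rests: that the stabilizers of points of $\dHc$ are amenable \emph{as abstract groups}. Your step (1) uses amenability of the minimal parabolic $P$ as a topological group (true for every semisimple Lie group) and you carry this over to $G^\delta$ without comment, but for the $\ell^\infty$-resolution of the discrete group what is needed is discrete amenability of $P$. This is special to $\mathrm{PU}(2,1)$: the paper identifies $P$ as the Heisenberg similarity group $(\mathbb{R}_+\times U(1))\rtimes \mathfrak{N}$, which is solvable and hence amenable with the discrete topology. For comparison, the minimal parabolic of $\mathrm{PO}(n,1)$ with $n\geq 4$ contains $SO(n-1)$ and therefore free subgroups, so it is not amenable as an abstract group, yet your steps (1)--(3) would ``prove'' the analogous statement there as well --- which shows the route is flawed. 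To repair the proof, drop the measure-theoretic detour, verify discrete amenability of the stabilizers as above, and argue directly with relative injectivity of $\ell^\infty((\dHc)^{k+1};\mathbb{R})$ and the coning homotopy, i.e.\ promote your step (4) aside to the actual proof (equivalently, cite \cite[Theorem 7.5.3]{Mon1} as the paper does).
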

\begin{proof}
The minimal parabolic subgroup of $\mathrm{PU}(2,1)$ is the Heisenberg similarity group $\mathbf{Sim}(\mathcal{H})=(\mathbb{R}_+\times U(1))\rtimes \mathfrak{N}$. This group is amenable as an abstract group and thus, by \cite[Theorem 7.5.3]{Mon1},  $H_b(\mathrm{PU}(2,1)^\delta;\mathbb{R})$ is given by the cohomology of the complex $(\ell^\infty((\mathrm{PU}(2,1)/ \mathbf{Sim}(\mathcal{H}))^{*+1};\mathbb{R})^{\mathrm{PU}(2,1)},\delta)$. 
\end{proof}

\begin{Le}
\label{normlattice}
Let $\Gamma=\mathrm{PU}(2,1;\mathbb{Z}[i])$. Then $\|[c_\Phi\cup c_\Phi]\|_\infty$ is equal to 
\[
\mathrm{inf}\{\|c_\Phi^\delta\cup c_\Phi^\delta+\delta b\|_{\ell^\infty} \mid b:(\dHc)^4\to\mathbb{R} \mathrm{\,bounded\, and\,}\Gamma\mathrm{-invariant}\}
\]
\end{Le}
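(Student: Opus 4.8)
My goal is to show that the Gromov norm of the class $[c_\Phi\cup c_\Phi]\in H^4_{c,b}(\mathrm{PU}(2,1);\mathbb{R})$, computed over the topological group with continuous bounded cochains, coincides with the infimum of $\ell^\infty$-norms of representatives of the form $c_\Phi^\delta\cup c_\Phi^\delta+\delta b$ where $b$ ranges over \emph{bounded $\Gamma$-invariant} functions on $(\dHc)^4$, for the specific lattice $\Gamma=\mathrm{PU}(2,1;\mathbb{Z}[i])$. I would prove this as a chain of three norm-nonincreasing comparisons, sandwiching the desired quantity between itself.

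\textbf{Step 1: Pass from the topological group to the discrete group.} First I would observe that the restriction map $H^*_{c,b}(\mathrm{PU}(2,1);\mathbb{R})\to H^*_b(\mathrm{PU}(2,1)^\delta;\mathbb{R})$ is norm-nonincreasing (any continuous bounded representative is a fortiori a bounded representative of the discrete class), so $\|[c_\Phi^\delta\cup c_\Phi^\delta]\|_{\ell^\infty}\le\|[c_\Phi\cup c_\Phi]\|_\infty$. By Lemma \ref{discrete}, the discrete bounded cohomology is computed by the boundary resolution $\ell^\infty((\dHc)^{*+1};\mathbb{R})^{\mathrm{PU}(2,1)}$, and this identification is isometric; since $c_\Phi$ itself is genuinely defined pointwise everywhere on $(\dHc)^3$ (it is $2\mathbb{A}$ by Theorem \ref{c=2A}), the formula \eqref{alternatingcup} gives an everywhere-defined, bounded, $\mathrm{PU}(2,1)$-invariant cocycle $c_\Phi^\delta\cup c_\Phi^\delta$ on $(\dHc)^5$ representing this class. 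Hence $\|[c_\Phi^\delta\cup c_\Phi^\delta]\|_{\ell^\infty}=\inf\{\|c_\Phi^\delta\cup c_\Phi^\delta+\delta b\|_{\ell^\infty}\mid b:(\dHc)^4\to\mathbb{R}\text{ bounded and }\mathrm{PU}(2,1)\text{-invariant}\}$.

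\textbf{Step 2: Relax $\mathrm{PU}(2,1)$-invariance to $\Gamma$-invariance.} Since $\Gamma<\mathrm{PU}(2,1)$, every $\mathrm{PU}(2,1)$-invariant $b$ is in particular $\Gamma$-invariant, so the infimum over the larger class of $\Gamma$-invariant coboundary-correctors is $\le$ the infimum over $\mathrm{PU}(2,1)$-invariant ones; that is, the quantity in the Lemma is $\le\|[c_\Phi\cup c_\Phi]\|_\infty$. For the reverse inequality — the heart of the argument — I would show that $\|[c_\Phi\cup c_\Phi]\|_\infty$ is bounded above by the $\Gamma$-version. The idea is a transfer/averaging argument: $\Gamma$ is a lattice in $\mathrm{PU}(2,1)$, so it is a \emph{cocompact-up-to-finite-covolume} subgroup, and one can integrate a $\Gamma$-invariant cochain against a fundamental domain (using the finite Haar measure on $\Gamma\backslash\mathrm{PU}(2,1)$, normalized to total mass $1$) to produce a $\mathrm{PU}(2,1)$-invariant cochain without increasing the sup-norm, and commuting with $\delta$. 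Concretely, given $\Gamma$-invariant bounded $b$ with $\|c_\Phi^\delta\cup c_\Phi^\delta+\delta b\|_{\ell^\infty}$ close to the $\Gamma$-infimum, set $\bar b=\int_{\Gamma\backslash\mathrm{PU}(2,1)}g\cdot b\,d\mu(g)$; this is $\mathrm{PU}(2,1)$-invariant, bounded by $\|b\|_\infty$, and since $c_\Phi^\delta\cup c_\Phi^\delta$ is already $\mathrm{PU}(2,1)$-invariant, $\|c_\Phi^\delta\cup c_\Phi^\delta+\delta\bar b\|_{\ell^\infty}=\|\int g\cdot(c_\Phi^\delta\cup c_\Phi^\delta+\delta b)\,d\mu\|_{\ell^\infty}\le\sup_g\|g\cdot(c_\Phi^\delta\cup c_\Phi^\delta+\delta b)\|_{\ell^\infty}=\|c_\Phi^\delta\cup c_\Phi^\delta+\delta b\|_{\ell^\infty}$. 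Combined with Step 1 this closes the loop.

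\textbf{Main obstacle.} The delicate point is the integration in Step 2: one must check that $g\mapsto g\cdot b$ is measurable enough for the Bochner-type integral $\int_{\Gamma\backslash\mathrm{PU}(2,1)}g\cdot b\,d\mu$ to make sense as an element of $\ell^\infty((\dHc)^4;\mathbb{R})$, and that the resulting $\bar b$ genuinely satisfies $\delta\bar b=\int g\cdot\delta b\,d\mu$. Because we are working with the \emph{everywhere-defined} cochain $c_\Phi^\delta$ (not an $L^\infty$ class) and $\ell^\infty$ is not separable, one should either evaluate pointwise — for each fixed tuple $(\mathbf{p}_1,\ldots,\mathbf{p}_4)\in(\dHc)^4$ the function $g\mapsto b(g^{-1}\mathbf{p}_1,\ldots,g^{-1}\mathbf{p}_4)$ is a bounded measurable scalar function of $g$, whose integral defines $\bar b(\mathbf{p}_1,\ldots,\mathbf{p}_4)$ — or invoke the standard fact (as in Monod, \cite{Mon1}) that such transfer maps on bounded cohomology are well-defined and norm-nonincreasing. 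Pointwise evaluation sidesteps the Banach-space subtleties: boundedness of $\bar b$ is immediate from $|\bar b|\le\|b\|_\infty\cdot\mu(\Gamma\backslash\mathrm{PU}(2,1))=\|b\|_\infty$, $\mathrm{PU}(2,1)$-invariance follows from invariance of Haar measure, and $\delta$ commutes with the pointwise integral by linearity. I would also note that the choice $\Gamma=\mathrm{PU}(2,1;\mathbb{Z}[i])$ plays no essential role beyond being \emph{some} lattice (so that $\mathrm{PU}(2,1)/\Gamma$ carries a finite invariant measure); its explicit arithmetic nature is convenient only because the subsequent construction of the test $5$-tuples in the lower-bound estimate will use $\Gamma$-orbits.
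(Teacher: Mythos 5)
Your Step 2 is where the argument breaks, and it breaks in two ways. First, the transfer $\bar b(\mathbf{p})=\int_{\Gamma\backslash\mathrm{PU}(2,1)}b(g^{-1}\mathbf{p})\,d\mu(g)$ is not defined on the objects you are allowed to use: the cochains $b$ in the infimum are \emph{arbitrary} bounded $\Gamma$-invariant functions on $(\dHc)^4$, with no measurability whatsoever, and invariance under the countable group $\Gamma$ does not make the orbit map $g\mapsto b(g^{-1}\mathbf{p}_1,\dots,g^{-1}\mathbf{p}_4)$ measurable on $\mathrm{PU}(2,1)$ (take any function defined arbitrarily on a set of $\Gamma$-orbit representatives and extended $\Gamma$-invariantly). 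The claim in your ``main obstacle'' paragraph that this scalar function is measurable is exactly the unjustified step; the transfer/averaging machinery of bounded cohomology lives in the continuous (or measurable) category, and this is precisely why the paper, following Section 6 of \cite{BuMo}, has to juggle the three complexes $\mathscr{L}^\infty$, $L^\infty$ and $\ell^\infty$ rather than average $\ell^\infty$-cochains directly.

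Second, even if the averaging were legitimate, your chain of inequalities does not close. Averaging would only show that the $\Gamma$-invariant infimum equals the infimum over $\mathrm{PU}(2,1)$-invariant bounded cochains on the boundary, i.e.\ (by Lemma \ref{discrete}) the seminorm of $[c_\Phi^\delta\cup c_\Phi^\delta]$ in $H^4_b(\mathrm{PU}(2,1)^\delta;\mathbb{R})$. But your Step 1 only gives the inequality $\|[c_\Phi^\delta\cup c_\Phi^\delta]\|_{\ell^\infty}\leq\|[c_\Phi\cup c_\Phi]\|_\infty$; what the Lemma needs is the reverse bound, namely that no merely bounded, possibly non-measurable, invariant coboundary can push the norm strictly below the continuous bounded cohomology norm. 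Nothing in your proposal addresses this, and it is the actual content of the paper's proof: one uses (i) that the measurable function class $q(c_\Phi\cup c_\Phi)$ and the everywhere-defined cocycle $i(c_\Phi\cup c_\Phi)=c_\Phi^\delta\cup c_\Phi^\delta$ restrict to the \emph{same} class in $H^4_b(\Gamma;\mathbb{R})$, (ii) that restriction to the lattice $\Gamma$ is isometric in continuous bounded cohomology (\cite[Proposition 8.6.2]{Mon1} -- this is where the legitimate transfer argument, carried out in the measurable category on the group level, is hidden), and (iii) that $\ell^\infty((G/P)^{*+1};\mathbb{R})^\Gamma$ computes $H^*_b(\Gamma;\mathbb{R})$ isometrically because the stabilizers are amenable as abstract groups. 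Your scheme contains no substitute for (ii), so the hard direction of the equality is missing; the role of the lattice is not merely to ``carry a finite invariant measure'' for a naive pointwise average, but to transport the norm computation into $H^4_b(\Gamma;\mathbb{R})$, where continuity can be forgotten at no cost.
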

\begin{proof}
We follow Section 6 of \cite{BuMo}. Note that for the proof presented there amenability of the minimal parabolic as an abstract group is not necessary. Let $G=\mathrm{PU}(2,1)$ and let $P$ be its minimal parabolic subgroup. Denote by $\mathscr{L}^\infty((G/P)^{p+1};\mathbb{R})$ the Banach $G$-module of bounded measurable functions $(G/P)^{p+1}\to\mathbb{R}$ so that we have the natural quotient map 
\[
q:\mathscr{L}^\infty((G/P)^{p+1};\mathbb{R})\twoheadrightarrow L^\infty((G/P)^{p+1};\mathbb{R}), 
\]
and the natural inclusion map
\[
i:\mathscr{L}^\infty((G/P)^{p+1};\mathbb{R})\hookrightarrow \ell^\infty((G/P)^{p+1};\mathbb{R}).
\]
Then $q(c_\Phi\cup c_\Phi)$ is the function class of $c_\Phi\cup c_\Phi$ in  $L^\infty((G/P)^{5};\mathbb{R})^G$ (which by slight abuse of notation we also denote by $c_\Phi\cup c_\Phi$ in the rest of this text). On the other hand, $i(c_\Phi\cup c_\Phi)=c_\Phi^\delta\cup c_\Phi^\delta\in\ell^\infty((G/P)^{p+1};\mathbb{R})^G$. The restriction maps $\mathrm{res}_{(c),b}:H_{(c,)b}^4(G;\mathbb{R})\to H_b^4(\Gamma;\mathbb{R})$ send $[q(c_\Phi\cup c_\Phi)]$ and $[i(c_\Phi\cup c_\Phi)]$ to the same cohomology class in $H_b^4(\Gamma;\mathbb{R})$. It follows that 
 \[
 \|[\mathrm{res}_{c,b}(q(c_\Phi\cup c_\Phi))]\|_{\ell^\infty}=\|[\mathrm{res_b}(i(c_\Phi \cup c_\Phi))]\|_{\ell^\infty}. 
 \]
Since restricting to a cocompact lattice preserves the seminorm in continuous bounded comology \cite[Proposition 8.6.2]{Mon1} we can conclude 
\[
 \|[q(c_\Phi\cup c_\Phi)]\|_{\infty}=\|[\mathrm{res_b}(i(c_\Phi \cup c_\Phi))]\|_{\ell^\infty}.
\]
Furthermore, note that the restriction map $\mathrm{res}_b$ is realized by the inclusion $\ell^\infty((G/P)^{p+1};\mathbb{R})^G\hookrightarrow \ell^\infty((G/P)^{p+1};\mathbb{R})^\Gamma$. This finishes the proof. 
\end{proof} 

Let
\[
x_+=\begin{bmatrix} 1\\ 0\\1\end{bmatrix}, x_i=\begin{bmatrix} i\\0\\1\end{bmatrix}, y_+=\begin{bmatrix} 0\\1\\1\end{bmatrix}, y_i=\begin{bmatrix} 0\\i\\1\end{bmatrix}, y_{-i}=\begin{bmatrix}0\\-i\\1\end{bmatrix}, v=\begin{bmatrix} \frac{1}{2}(1+i) \\ \frac{1}{2}(1+i) \\ 1\end{bmatrix},
\]
be points in the boundary of the complex hyperbolic plane in the projective model. We have
\begin{align*}
&\A(x_+,x_i,y_+)=\A(x_+,x_i,y_i)=\A(x_+,x_i,y_{-i})=\A(x_+,y_+,y_i)=\frac{\pi}{4},\\
&\A(x_+,y_+,y_{-i})=\A(x_+,x_i,v)=-\frac{\pi}{4},\\
&\A(x_+,y_i,y_{-i})=\A(x_+,y_+,v)=0, \\
&\A(x_+,y_i,v)=-\frac{\pi}{2}.
\end{align*}
Recall that equation \ref{alternatingcup} gives a convenient way for calculating the alternating cup product $c_\Phi\cup c_\Phi$ and furthermore that $c_\Phi=2\A$. Therefore 
\begin{align*}
c_\Phi^\delta&\cup c_\Phi^\delta(x_+,x_i,y_+,y_i,y_{-i})\\
&=\frac{1}{3}\left[c_\Phi^\delta(x_+,x_i,y_+)c_\Phi^\delta(x_+,y_i,y_{-i})-c_\Phi^\delta(x_+,x_i,y_i)c_\Phi^\delta(x_+,y_+,y_{-i})\right.\\
&\hspace{.5cm}\left. +c_\Phi^\delta(x_+,x_i,y_{-i})c_\Phi^\delta(x_+,y_+,y_i)\right]\\
&=\frac{1}{3}\cdot\left[\frac{\pi}{2}\cdot 0-\frac{\pi}{2}\cdot\left(-\frac{\pi}{2}\right)+\frac{\pi}{2}\cdot\frac{\pi}{2}\right]\\
&=\frac{1}{6}\pi^2, 
\end{align*}
and
\begin{align*}
c_\Phi^\delta&\cup c_\Phi^\delta(x_+,x_i,y_+,y_i,v)\\
&=\frac{1}{3}\left[c_\Phi^\delta(x_+,x_i,y_+)c_\Phi^\delta(x_+,y_i,v)-c_\Phi^\delta(x_+,x_i,y_i)c_\Phi^\delta(x_+,y_+,v)\right.\\
&\hspace{.5cm} \left.+c_\Phi^\delta(x_+,x_i,v)\cdot c_\Phi^\delta(x_+,y_+,y_i)\right]\\
&= \frac{1}{3}\cdot\left[\frac{\pi}{2}\cdot(-\pi)-\frac{\pi}{2}\cdot 0+\left(-\frac{\pi}{2}\right)\cdot\frac{\pi}{2}\right]\\
&=-\frac{1}{4}\pi^2.
\end{align*}

\begin{Le}
\label{oct}
Let $b:(\dHc)^4\to\mathbb{R}$ be an alternating $\Gamma$-invariant cochain. Then $\delta b(x_+,x_i,y_+,y_i,y_{-i}) = 2b(x_+,x_i,y_+,y_i)$.
\end{Le}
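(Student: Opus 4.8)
The plan is to expand $\delta b$ at the $5$-tuple $(x_+,x_i,y_+,y_i,y_{-i})$ by the homogeneous coboundary formula and then to simplify the five resulting $4$-tuple terms using only that $b$ is alternating and invariant under a few diagonal elements of $\Gamma=\mathrm{PU}(2,1;\mathbb{Z}[i])$. Concretely,
\begin{align*}
\delta b(x_+,x_i,y_+,y_i,y_{-i}) &= b(x_i,y_+,y_i,y_{-i})-b(x_+,y_+,y_i,y_{-i})+b(x_+,x_i,y_i,y_{-i})\\
&\quad-b(x_+,x_i,y_+,y_{-i})+b(x_+,x_i,y_+,y_i),
\end{align*}
and the claim is that the first two terms cancel, the third vanishes, and the fourth equals $-b(x_+,x_i,y_+,y_i)$; summing then leaves exactly $2b(x_+,x_i,y_+,y_i)$.

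Each of these three facts comes from a diagonal unitary matrix with entries in $\{1,-1,i\}\subset\mathbb{Z}[i]$ — hence in $\Gamma$ — whose action on the explicit boundary vectors is read off instantly in the projective model. First, $\mathrm{diag}(i,1,1)$ fixes $y_+,y_i,y_{-i}$ and carries $x_+$ to $x_i$, so $\Gamma$-invariance gives $b(x_+,y_+,y_i,y_{-i})=b(x_i,y_+,y_i,y_{-i})$, cancelling the first two terms. Second, $\mathrm{diag}(1,-1,1)$ fixes $x_+$ and $x_i$ and interchanges $y_i$ and $y_{-i}$; combining $\Gamma$-invariance with the fact that $b$ is alternating forces $b(x_+,x_i,y_i,y_{-i})=-b(x_+,x_i,y_i,y_{-i})$, so this term is $0$. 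Third, $\mathrm{diag}(1,i,1)$ fixes $x_+$ and $x_i$, sends $y_+$ to $y_i$ and $y_{-i}$ to $y_+$, so $b(x_+,x_i,y_+,y_{-i})=b(x_+,x_i,y_i,y_+)=-b(x_+,x_i,y_+,y_i)$, the last equality again by alternation. Substituting these three identities into the displayed expansion yields the lemma.

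I do not expect a genuine obstacle here: once one notices that the required symmetries are all diagonal elements of $\Gamma$, the argument is pure bookkeeping. The only points to verify are that each of the three matrices lies in $\mathrm{PU}(2,1;\mathbb{Z}[i])$ (clear, as each is unitary for the Hermitian form and has Gaussian-integer entries) and that it acts on the relevant six boundary points as asserted (a one-line matrix–vector computation in each case). It is worth recording these elements explicitly, since the analogous identity for the $5$-tuple $(x_+,x_i,y_+,y_i,v)$ can presumably be handled by the same bookkeeping with a similar small set of elements of $\Gamma$.
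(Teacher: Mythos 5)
Your proof is correct and follows essentially the same route as the paper: expand $\delta b$, kill the term $b(x_+,x_i,y_i,y_{-i})$ with $\mathrm{diag}(1,-1,1)$, cancel the first two terms with a diagonal element moving $x_+$ to $x_i$ (the paper uses the inverse $\mathrm{diag}(-i,1,1)$, phrased as a complex reflection in the line through $y_+,y_i,y_{-i}$), and convert the fourth term with $\mathrm{diag}(1,i,1)$ plus alternation. The only cosmetic difference is that the paper describes these elements as complex reflections with reflection factors rather than bare diagonal matrices.
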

\begin{proof}
By definition
\begin{align*}
\delta b(x_+,x_i,y_+,y_i,y_{-i})= b&(x_i,y_+,y_i,y_{-i})-b(x_+,y_+,y_i,y_{-i})\\ 
&+b(x_+,x_i,y_i,y_{-i}) -b(x_+,x_i,y_+,y_{-i})\\
& +b(x_+,x_i,y_+,y_i).
\end{align*}
Denote by $L_x$ the complex line that contains $x_+$ and $x_i$ and by $L_y$ the complex line that contains $y_+,y_i$ and $y_{-i}$. The reflection in $L_x$, represented by the matrix 
\[
\begin{bmatrix} 1 & 0 & 0 \\ 0 & -1 & 0 \\ 0 & 0 &1 \end{bmatrix},
\]
exchanges $y_i$ and $y_{-i}$ while fixing $x_+$ and $x_i$. Thus, as $b$ is alternating,
\begin{equation}
\label{eq1}
b(x_+,x_i,y_i,y_{-i})=0.
\end{equation}
The reflection in $L_y$ with reflection factor $-i$, represented by the matrix
\[
\begin{bmatrix} -i & 0 & 0 \\ 0 &1 & 0 \\ 0 & 0 & 1 \end{bmatrix},
\]
sends $x_i$ to $x_+$ while fixing $y_+, y_i$ and $y_{-i}$. It follows that 
\begin{equation}
\label{eq2}
b(x_i,y_+,y_i,y_{-i})=b(x_+,y_+,y_i,y_{-i}).
\end{equation}
Lastly, the reflection in $L_x$ with reflection factor $i$, represented by the matrix
\[
\begin{bmatrix} 1 & 0 & 0 \\ 0 & i & 0 \\ 0 & 0 & 1 \end{bmatrix},
\]
maps $y_+\mapsto y_i$ and $y_{-i}\mapsto y_+$ while fixing $x_+$ and $x_i$. It follows that $b(x_+,x_i,y_+,y_{-i})=b(x_+,x_i,y_i,y_+)$ and hence, since $b$ is alternating,
\begin{equation}
\label{eq3}
b(x_+,x_i,y_+,y_{-i})=-b(x_+,x_i,y_+,y_i).
\end{equation}
Combining equations \ref{eq1},\ref{eq2} and \ref{eq3} gives 
\[
\delta b(x_+,x_i,y_+,y_i,y_{-i}) = 2b(x_+,x_i,y_+,y_i).
\] 
\end{proof}

\begin{Le}
\label{cube}
Let $b:(\dHc)^4\to\mathbb{R}$ be an alternating $\Gamma$-invariant cochain. Then $\delta b(x_+,x_i,y_+,y_i,v)=b(x_+,x_i,y_+,y_i)$.
\end{Le}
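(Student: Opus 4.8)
The plan is to mimic the proof of Lemma \ref{oct}: expand $\delta b(x_+,x_i,y_+,y_i,v)$ into its five terms, then use $\Gamma$-invariance of $b$ together with the symmetries of the explicit configuration to collapse four of the five terms and relate everything back to $b(x_+,x_i,y_+,y_i)$. Writing out the definition gives
\[
\delta b(x_+,x_i,y_+,y_i,v) = b(x_i,y_+,y_i,v) - b(x_+,y_+,y_i,v) + b(x_+,x_i,y_i,v) - b(x_+,x_i,y_+,v) + b(x_+,x_i,y_+,y_i).
\]
So I need to produce isometries in $\Gamma = \mathrm{PU}(2,1;\mathbb{Z}[i])$ that either fix $v$ and permute $\{x_+,x_i\}$ or $\{y_+,y_i\}$ suitably, or that identify $v$-terms with each other up to sign, so that the first four terms cancel in pairs.

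\textbf{Key steps, in order.} First I would look for the analogue of equation \ref{eq1}: a reflection fixing three of the four points $x_+,x_i,y_i,v$ and moving... actually more promisingly, a symmetry of the whole $5$-tuple. The point $v = \frac{1}{2}(1+i)(1,1,0)^t + (0,0,1)^t$ is the ``balanced'' point between the $x$- and $y$-lines, so the natural candidate is the involution swapping the two coordinate axes, represented by $\begin{bmatrix} 0 & 1 & 0 \\ 1 & 0 & 0 \\ 0 & 0 & 1 \end{bmatrix}$ or its composition with a reflection factor. Under the swap $z_1 \leftrightarrow z_2$ one has $x_+ \leftrightarrow y_+$, and $v \mapsto v$; I would check which element of $\Gamma$ realizes the needed identifications among $x_i, y_i, v$. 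For instance, an element sending $(x_+,x_i,y_i,v) \mapsto (y_+, y_i, x_i, v)$ (or a signed variant) would, by alternation of $b$, relate $b(x_+,x_i,y_i,v)$ to $\pm b(x_+,y_+,y_i,v)$, and similarly an element handling $b(x_i,y_+,y_i,v)$ versus $b(x_+,x_i,y_+,v)$. The goal is to get the four $v$-containing terms to cancel (possibly after also invoking a reflection in $L_x$ or $L_y$ with an appropriate unit reflection factor, exactly as in Lemma \ref{oct}), leaving $\delta b(x_+,x_i,y_+,y_i,v) = b(x_+,x_i,y_+,y_i)$.

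\textbf{Main obstacle.} The hard part is finding the right explicit matrices in $\Gamma$ with Gaussian-integer entries and verifying they act as claimed on these six boundary points in the projective model (remembering that projective representatives may be rescaled, so an apparent value like $v$ with a $\frac12(1+i)$ entry must be cleared to a Gaussian-integer vector, e.g. $(1+i, 1+i, 2)^t$, before one checks a matrix preserves it). One must be careful that each constructed isometry genuinely lies in $\mathrm{PU}(2,1;\mathbb{Z}[i])$ — i.e. preserves the Hermitian form and the lattice — and that one correctly tracks the sign $\mathrm{sign}(\sigma)$ of the induced permutation of the four retained arguments when invoking that $b$ is alternating. Once the correct symmetries are pinned down, the cancellation is a short bookkeeping argument formally identical to the proof of Lemma \ref{oct}; I expect the single surviving term to be $+b(x_+,x_i,y_+,y_i)$, with the coefficient $1$ (rather than $2$ as in the octahedral case) reflecting that the configuration $(x_+,x_i,y_+,y_i,v)$ has a ``cube-like'' rather than ``octahedral'' symmetry, consistent with the lemma's statement and its label \texttt{cube}.
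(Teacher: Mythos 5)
Your setup and your first symmetry coincide with the paper's: the expansion of $\delta b$ is correct, and the coordinate swap represented by $\begin{bmatrix} 0&1&0\\ 1&0&0\\ 0&0&1\end{bmatrix}$, which exchanges $x_+\leftrightarrow y_+$ and $x_i\leftrightarrow y_i$ while fixing $v$, yields (using alternation, with even rearrangements) $b(x_i,y_+,y_i,v)=b(x_+,x_i,y_i,v)$ and $b(x_+,y_+,y_i,v)=b(x_+,x_i,y_+,v)$, hence
\[
\delta b(x_+,x_i,y_+,y_i,v)=2b(x_+,x_i,y_i,v)-2b(x_+,x_i,y_+,v)+b(x_+,x_i,y_+,y_i).
\]
At this point the four $v$-terms have \emph{not} cancelled: what remains to be proved is exactly $b(x_+,x_i,y_i,v)=b(x_+,x_i,y_+,v)$, and this is the substance of the lemma, which your proposal leaves open (you flag it yourself as the ``main obstacle''). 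Moreover, your suggested fallback --- a reflection in $L_x$ or $L_y$ with a suitable unit reflection factor, ``exactly as in Lemma \ref{oct}'' --- cannot supply it: such a reflection is represented by $\mathrm{diag}(1,\eta,1)$, resp.\ $\mathrm{diag}(\eta,1,1)$, and requiring it to fix $v=\bigl(\tfrac12(1+i),\tfrac12(1+i),1\bigr)$ projectively forces $\eta=1$. For instance $\mathrm{diag}(1,-i,1)$ does send $y_i\mapsto y_+$ but moves $v$ to $\bigl(\tfrac12(1+i),\tfrac12(1-i),1\bigr)$, so no element of the Lemma \ref{oct} toolbox identifies the two surviving terms.

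The paper closes this gap with a genuinely different element of $\Gamma$, namely
\[
\begin{bmatrix} -1+i & 0 & 1 \\ 0 & -i & 0 \\ i & 0 & 1-i \end{bmatrix},
\]
which does not fix $v$ at all but sends the $4$-tuple $(x_+,x_i,y_i,v)$ to $(x_i,x_+,v,y_+)$; since rearranging $(x_i,x_+,v,y_+)$ back to $(x_+,x_i,y_+,v)$ is an even permutation, alternation gives $b(x_+,x_i,y_i,v)=b(x_+,x_i,y_+,v)$, and the displayed identity collapses to $\delta b(x_+,x_i,y_+,y_i,v)=b(x_+,x_i,y_+,y_i)$. Without exhibiting such a matrix (and checking that it preserves the Hermitian form, so lies in $\Gamma$, and acts as claimed on projective representatives), your argument only reduces the problem; it does not prove the lemma.
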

\begin{proof}
The isomorphism represented by the matrix
\[
\begin{bmatrix} 0 & 1 & 0 \\ 1 & 0 & 0 \\ 0 & 0 & 1\end{bmatrix} 
\]
exchanges $x_+$ with $y_+$, and $x_i$ with $y_i$ while fixing $v$. Combined with the fact that $b$ is alternating this gives
\begin{align*}
&b(x_i,y_+,y_i,v)  =  b(x_+,x_i,y_i,v),\, \mathrm{and} \,\,\, b(x_+,y_+,y_i,v) =  b(x_+,x_i,y_+,v). 
\end{align*}
Thus
\[
\delta b(x_+,x_i,y_+,y_i,v)=2b(x_+,x_i,y_i,v)-2b(x_+,x_i,y_+,v)+b(x_+,x_i,y_+,y_i).
\]
Furthermore, the isomorphism represented by the matrix
\[
\begin{bmatrix} -1+i & 0 & 1 \\ 0 & -i & 0 \\ i & 0 & 1-i \end{bmatrix}
\]
sends the $4$-tuple $(x_+,x_i,y_i,v)$ to $(x_i,x_+,v,y_+)$ and thus
\[
b(x_+,x_i,y_i,v)=b(x_+,x_i,y_+,v).
\]
It follows that $\delta b(x_+,x_i,y_+,y_i,v)=b(x_+,x_i,y_+,y_i)$
\end{proof}

 \begin{Rem}
Note that
\[
\A(x_+,x_i,y_+)=\A(x_+,x_i,y_i)=\A(x_+,y_+,y_i)=\A(x_i,y_+,y_i)=\frac{\pi}{4},
\]
and thus the $4$-tuple $(x_+,x_i,y_+,y_i)$ is a  regular special symmetric tetrahedron as defined in \cite{Fal}. Coordinates in the Heisenberg model for such a tetrahedron (with $\A=\pi/4$) are for example $\infty, 0, (1,1)$ and $(i,1)$.
\end{Rem}

 \begin{Rem}
 The $8$ vectors
 \[
 \begin{bmatrix} \pm 1\\0\\1\end{bmatrix}, \begin{bmatrix} \pm i\\0\\1\end{bmatrix}, \begin{bmatrix} 0\\ \pm 1\\ 1\end{bmatrix}, \begin{bmatrix} 0\\ \pm i\\1\end{bmatrix},
 \]
 correspond to the eight vertices of a regular octahedron in $\mathbb{R}^4$ with edge length $\sqrt{2}$. The $5$-tuple $(x_+,x_i,y_+,y_i,y_{-i})$ corresponds to one of the simplices in the minimal triangulation of this octahedron. In fact, $c_\Phi^\delta\cup c_\Phi^\delta$ takes the value $\pm \pi^2/6$ on all the simplices of this triangulation. Furthermore the eight vertices of the form
 \[
 \begin{bmatrix} \frac{1}{2}(\pm 1\pm i)\\ \frac{1}{2}(\pm 1\pm i)\\1\end{bmatrix},
 \]
 with an even number of plus signs also correspond to a regular octahedron in $\mathbb{R}^4$ with edge length $\sqrt{2}$. Together the $16$ vertices correspond to a regular cube in $\mathbb{R}^4$ with edge length $1$. The $5$-tuple $(x_+,x_i,y_+,y_i,v)$ is one of the simplices in the minimal triangulation of this cube found in \cite{Mara}. It corresponds to one of the eight corners that are ``sliced off" in this construction and in fact $c_\Phi^\delta\cup c_\Phi^\delta$ is equal to $\pm \pi^2/4$ on all these eight simplices while on the remaining eight simplices in the triangulation it is again equal to $\pm \pi^2/6$. 
 \end{Rem}

\begin{Pro}
\label{lowerbound}
$\|[c_\Phi\cup c_\Phi]\|_\infty\geq \frac{2}{9}\pi^2$.
\end{Pro}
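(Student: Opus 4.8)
The plan is to combine the two explicit cochain computations with the two coboundary relations from Lemmas~\ref{oct} and \ref{cube} to obtain a linear relation that every bounded $\Gamma$-invariant alternating cochain $b$ must satisfy, and then extract a lower bound for $\|c_\Phi^\delta\cup c_\Phi^\delta+\delta b\|_{\ell^\infty}$ by a pigeonhole/averaging argument. Concretely, write $\beta := b(x_+,x_i,y_+,y_i)$. By Lemma~\ref{oct} the cochain $c_\Phi^\delta\cup c_\Phi^\delta+\delta b$ takes the value $\tfrac16\pi^2+2\beta$ on the $5$-tuple $(x_+,x_i,y_+,y_i,y_{-i})$, and by Lemma~\ref{cube} it takes the value $-\tfrac14\pi^2+\beta$ on the $5$-tuple $(x_+,x_i,y_+,y_i,v)$. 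Hence
\[
\|c_\Phi^\delta\cup c_\Phi^\delta+\delta b\|_{\ell^\infty}\geq\max\left\{\left|\tfrac16\pi^2+2\beta\right|,\ \left|-\tfrac14\pi^2+\beta\right|\right\}.
\]
Now I would minimise the right-hand side over $\beta\in\mathbb{R}$: the two absolute-value functions, viewed as functions of $\beta$, are balanced when $\tfrac16\pi^2+2\beta=-(-\tfrac14\pi^2+\beta)$, i.e.\ at $\beta=\tfrac{1}{36}\pi^2$, giving common value $\tfrac{7}{18}\pi^2$ — but one should instead check the opposite sign pairing $\tfrac16\pi^2+2\beta=-\tfrac14\pi^2+\beta$, i.e.\ $\beta=-\tfrac{5}{12}\pi^2$, giving value $-\tfrac{2}{3}\pi^2$ in absolute value, which is larger; and the genuine minimum of the max of the two V-shaped graphs is attained at the crossing giving the smaller value. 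A quick case analysis over the three breakpoints $\beta=-\tfrac1{12}\pi^2$, $\beta=\tfrac14\pi^2$, and the two balance points shows the minimum of the maximum is $\tfrac{2}{9}\pi^2$, attained when $2\beta+\tfrac16\pi^2$ and $\beta-\tfrac14\pi^2$ have opposite signs and equal magnitude $\tfrac29\pi^2$, i.e.\ at $\beta$ solving $2\beta+\tfrac16\pi^2=\tfrac29\pi^2$ and $\beta-\tfrac14\pi^2=-\tfrac29\pi^2$ simultaneously, which is $\beta=\tfrac{1}{36}\pi^2$ — wait, this needs the arithmetic done carefully; the point is the optimal $\beta$ makes both expressions equal to $\pm\tfrac29\pi^2$.

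The clean way to present this without fumbling the arithmetic: observe that from the two displayed values we get, for any $\beta$,
\[
2\left|-\tfrac14\pi^2+\beta\right|+\left|\tfrac16\pi^2+2\beta\right|\ \geq\ \left|2\left(-\tfrac14\pi^2+\beta\right)-\left(\tfrac16\pi^2+2\beta\right)\right|\ =\ \left|-\tfrac12\pi^2-\tfrac16\pi^2\right|\ =\ \tfrac23\pi^2,
\]
by the triangle inequality. Hence $\max\{|{-\tfrac14\pi^2+\beta}|,|\tfrac16\pi^2+2\beta|\}\geq\tfrac13\cdot\tfrac23\pi^2=\tfrac29\pi^2$, since the left side is at most $3$ times the max. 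Therefore $\|c_\Phi^\delta\cup c_\Phi^\delta+\delta b\|_{\ell^\infty}\geq\tfrac29\pi^2$ for every bounded $\Gamma$-invariant alternating $b$. Taking the infimum over all such $b$ and invoking Lemma~\ref{normlattice} (together with the Remark that alternation does not change the norm, so we may restrict to alternating representatives) yields $\|[c_\Phi\cup c_\Phi]\|_\infty\geq\tfrac29\pi^2$.

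There is one subtlety to address before the linear algebra: Lemmas~\ref{oct} and \ref{cube} are stated for alternating $\Gamma$-invariant cochains $b$, and I should confirm that in the infimum of Lemma~\ref{normlattice} it suffices to range over such $b$. This follows from the Remark after \eqref{alternatingcup}: any bounded $\Gamma$-invariant cochain can be replaced by its alternation without increasing the $\ell^\infty$-norm of $c_\Phi^\delta\cup c_\Phi^\delta+\delta b$ (since $c_\Phi^\delta\cup c_\Phi^\delta$ is already alternating and $\mathrm{Alt}$ commutes with $\delta$ and is norm-nonincreasing), so the infimum over alternating $b$ equals the infimum over all $b$. I would state this reduction explicitly at the start of the proof. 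The main obstacle is not any single step but making sure the two combinatorial identities genuinely constrain $b$ on a \emph{single} common unknown $\beta=b(x_+,x_i,y_+,y_i)$ — this is exactly why the two chosen $5$-tuples share the $4$-face $(x_+,x_i,y_+,y_i)$, and why the Remarks emphasise that this tetrahedron is the regular special symmetric one; once that alignment is in place the bound drops out of the triangle inequality as above.

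\begin{proof}[Proof of Proposition~\ref{lowerbound}]
By Lemma~\ref{normlattice},
\[
\|[c_\Phi\cup c_\Phi]\|_\infty=\mathrm{inf}\{\|c_\Phi^\delta\cup c_\Phi^\delta+\delta b\|_{\ell^\infty}\mid b:(\dHc)^4\to\mathbb{R}\text{ bounded and }\Gamma\text{-invariant}\}.
\]
Since $c_\Phi^\delta\cup c_\Phi^\delta$ is alternating, the operator $\mathrm{Alt}$ commutes with $\delta$, and $\mathrm{Alt}$ does not increase the $\ell^\infty$-norm, we have $\|c_\Phi^\delta\cup c_\Phi^\delta+\delta\,\mathrm{Alt}(b)\|_{\ell^\infty}=\|\mathrm{Alt}(c_\Phi^\delta\cup c_\Phi^\delta+\delta b)\|_{\ell^\infty}\leq\|c_\Phi^\delta\cup c_\Phi^\delta+\delta b\|_{\ell^\infty}$; hence the infimum is unchanged if we restrict to alternating $\Gamma$-invariant $b$.

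Let $b$ be such a cochain and set $\beta:=b(x_+,x_i,y_+,y_i)$. Using the value $c_\Phi^\delta\cup c_\Phi^\delta(x_+,x_i,y_+,y_i,y_{-i})=\tfrac16\pi^2$ computed above together with Lemma~\ref{oct}, we get
\[
(c_\Phi^\delta\cup c_\Phi^\delta+\delta b)(x_+,x_i,y_+,y_i,y_{-i})=\tfrac16\pi^2+2\beta.
\]
Using the value $c_\Phi^\delta\cup c_\Phi^\delta(x_+,x_i,y_+,y_i,v)=-\tfrac14\pi^2$ together with Lemma~\ref{cube}, we get
\[
(c_\Phi^\delta\cup c_\Phi^\delta+\delta b)(x_+,x_i,y_+,y_i,v)=-\tfrac14\pi^2+\beta.
\]
Therefore
\[
\|c_\Phi^\delta\cup c_\Phi^\delta+\delta b\|_{\ell^\infty}\geq\max\left\{\left|\tfrac16\pi^2+2\beta\right|,\ \left|-\tfrac14\pi^2+\beta\right|\right\}=:m(\beta).
\]
By the triangle inequality,
\[
3m(\beta)\geq\left|\tfrac16\pi^2+2\beta\right|+2\left|-\tfrac14\pi^2+\beta\right|\geq\left|\left(\tfrac16\pi^2+2\beta\right)-2\left(-\tfrac14\pi^2+\beta\right)\right|=\left|\tfrac16\pi^2+\tfrac12\pi^2\right|=\tfrac23\pi^2,
\]
so $m(\beta)\geq\tfrac29\pi^2$ for every $\beta\in\mathbb{R}$. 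Hence $\|c_\Phi^\delta\cup c_\Phi^\delta+\delta b\|_{\ell^\infty}\geq\tfrac29\pi^2$ for every bounded $\Gamma$-invariant alternating $b$, and taking the infimum gives $\|[c_\Phi\cup c_\Phi]\|_\infty\geq\tfrac29\pi^2$.
\end{proof}
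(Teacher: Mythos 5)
Your proof is correct and is essentially the paper's own argument: both use the two $5$-tuples sharing the face $(x_+,x_i,y_+,y_i)$, the relations $\delta b(\mathbf{p}_1)=2b(x_+,x_i,y_+,y_i)$ and $\delta b(\mathbf{p}_2)=b(x_+,x_i,y_+,y_i)$ from Lemmas~\ref{oct} and \ref{cube}, and the combination $(c_\Phi^\delta\cup c_\Phi^\delta+\delta b)(\mathbf{p}_1)-2(c_\Phi^\delta\cup c_\Phi^\delta+\delta b)(\mathbf{p}_2)=\tfrac{2}{3}\pi^2$ divided by $3$, together with the reduction to alternating cochains. (The exploratory paragraph contains a harmless arithmetic slip about the ``balance point'' value, but your final displayed proof does not rely on it.)
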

\begin{proof}
Note that since $c_\Phi^\delta\cup c_\Phi^\delta$ is alternating we can restrict to alternating cochains to compute  $\mathrm{inf}\|c_\Phi^\delta\cup c_\Phi^\delta +\delta b\|_{\ell^\infty}$. By Lemma \ref{oct} and Lemma \ref{cube} 
\[
\delta b(x_+,x_i,y_+,y_i,y_{-i}) - 2\delta b (x_+,x_i,y_+,y_i,v)=0,
\]
for all alternating cochains $b\in\ell^\infty((\dHc)^4;\mathbb{R})^\Gamma$. Let $\mathbf{p}_1=(x_+,x_i,y_+,y_i,y_{-i})$ and $\mathbf{p}_2= (x_+,x_i,y_+,y_i,v)$. Then 
\begin{align*}
 \|c_\Phi^\delta\cup c_\Phi^\delta+\delta b\|_{\ell^\infty}&\geq \frac{1}{3}\left((c_\Phi^\delta\cup c_\Phi^\delta+\delta b)(\mathbf{p}_1)-2(c_\Phi^\delta\cup c_\Phi^\delta+\delta b)(\mathbf{p}_2)\right)\\
 &=\frac{1}{3}\left( c_\Phi^\delta\cup c_\Phi^\delta(\mathbf{p}_1)-2c_\Phi^\delta\cup c_\Phi^\delta(\mathbf{p}_2)\right)\\
 &=\frac{2}{9}\pi^2,
\end{align*}
for all alternating cochains $b\in \ell^\infty((\dHc)^4;\mathbb{R})^\Gamma$ and it therefore follows from Lemma \ref{normlattice} that 
\[
\|[c_\Phi\cup c_\Phi]\|_\infty\geq \frac{2}{9}\pi^2.
\]
\end{proof}

\begin{Rem}
Let $\Lambda=\mathrm{PU}(2,1;\mathbb{Z}[\omega])$ be the Eisenstein-Picard modular group which is by definition the subgroup of $\mathrm{PU}(2,1)$ with entries in the ring $\mathbb{Z}[\omega]$ where $\omega$ is a cube root of unity. Let $\Lambda_\infty$ be the stabilizer of $\infty$ in $\Lambda$ and let $\Lambda_T<\Lambda_\infty$ be its torsion-free subgroup. A $5$-tuple that realizes the lower bound $2\pi^2/9$ is given by the following points in the Heisenberg space:
\[
(0,-\sqrt{3}),\, (-\omega,0),\, (1,0), \, (0,\sqrt{3}),\, (0,2\sqrt{3}).
\]
These are the vertices $p_0,p_1,p_2,p_3$ and $p_8$ of the fundamental domain of $\Lambda_T$ described in \cite[Annexe A]{Ge}. 
\end{Rem}


\addcontentsline{toc}{section}{Bibliography}

\end{document}